\let\eps\varepsilon
\newcommand{\R}{\mathbb R}
\newcommand{\cA}{\mathcal A}
\newcommand{\hcA}{{\widehat{\mathcal A}}}
\newcommand{\ba}{\mathbf a}
\newcommand{\blf}{\mathbf f}
\newcommand{\bn}{\mathbf n}
\newcommand{\bs}{\mathbf s}
\newcommand{\bt}{\mathbf t}
\newcommand{\bu}{\mathbf u}
\newcommand{\te}{\widetilde e}
\newcommand{\bU}{\mathbf U}
\newcommand{\bj}{\mathbf j}
\newcommand{\bx}{\mathbf x}
\newcommand{\bchi}{{\boldsymbol{\chi}}}
\newcommand{\cT}{\mathcal T}
\newcommand{\cL}{\mathcal L}
\newcommand{\balpha}{\boldsymbol{\alpha}}
\newcommand{\hbalpha}{\widehat{\boldsymbol{\alpha}}}
\newcommand{\bbeta}{\boldsymbol{\beta}}
\newcommand{\tsigma}{\widetilde{\sigma}}
\newcommand{\halpha}{\widehat{\alpha}}
\newcommand{\bhalpha}{\widehat{\boldsymbol{\alpha}}}
\newcommand{\hbs}{\widehat{\bs}}
\newcommand{\bseta}{\boldsymbol{\eta}}
\newcommand{\bPhi}{\boldsymbol{\Phi}}
\newcommand{\bPsi}{\boldsymbol{\Psi}}
\newcommand{\tbPhi}{\widetilde{\bPhi}}
\newcommand{\rA}{\mathrm A}
\newcommand{\rI}{\mathrm I}
\newcommand{\rK}{\mathrm K}
\newcommand{\rM}{\mathrm M}
\newcommand{\rL}{\mathrm L}
\newcommand{\rP}{\mathrm P}
\newcommand{\rS}{\mathrm S}
\newcommand{\rU}{\mathrm U}
\newcommand{\rV}{\mathrm V}
\newcommand{\rW}{\mathrm W}
\newtheorem{remark}{Remark}
\def\norm#1{\|\hskip-0.25ex|{#1}|\hskip-0.25ex\|}
\def\rev#1{{\color{blue}#1}}
\def\MO#1{{\color{red}#1}}
\begin{document}
\title{A priori  analysis of a tensor ROM for parameter dependent parabolic problems}
\author{Alexander V. Mamonov\thanks{Department of Mathematics, University of Houston, 
		Houston, Texas 77204 (avmamonov@uh.edu).} \and
	Maxim A. Olshanskii\thanks{Department of Mathematics, University of Houston, 
		Houston, Texas 77204 (maolshanskiy@uh.edu).}
	}
\maketitle

\begin{abstract}
A space--time--parameters structure of parametric parabolic PDEs motivates the application of tensor methods to define  reduced order models (ROMs).
Within a tensor-based ROM framework, the matrix SVD -- a traditional dimension reduction technique -- yields to   a low-rank tensor decomposition (LRTD). 
 Such tensor extension of the  Galerkin proper orthogonal decomposition ROMs (POD--ROMs) benefits both the practical efficiency of the ROM and its amenability for rigorous error analysis when applied to parametric PDEs. The paper addresses the error analysis of the Galerkin  LRTD--ROM for an abstract linear parabolic problem that depends on multiple physical parameters. An error estimate for the LRTD--ROM solution is proved, which is uniform with respect to problem parameters and extends to parameter values not in a sampling/training set. The estimate is given in terms of discretization and sampling mesh properties,  and LRTD accuracy. The estimate depends on the {local} smoothness rather than on the Kolmogorov {$n$}-widths of the parameterized manifold of solutions.
 Theoretical results are illustrated with several numerical experiments.   
\end{abstract}

\begin{keywords}
Model order reduction, parametric PDEs, low-rank tensor decomposition, 
proper orthogonal decomposition
\end{keywords}

\section{Introduction} 
Numerical analysis of traditional discretization methods for PDEs is a mature research field. However, extending many of its fundamental results to reduced-order computational models appears to be a challenging task.

For projection-based ROMs such as Galerkin POD-ROMs, the complexity of this task is particularly determined by the {fact that} approximation properties of the reduced {spaces are problem} and metric-dependent. While Galerkin POD-ROMs still lack a complete error analysis that fully explains their practical performance,  progress has been made over the last two decades, starting with a milestone paper by Kunisch and Volkwein~\cite{kunisch2001galerkin}, which provides an initial error analysis of POD for certain parabolic problems. The analysis from \cite{kunisch2001galerkin} has been extended and modified in various ways, and error estimates for POD have been derived for incompressible fluid model equations~\cite{kunisch2002galerkin, iliescu2014variational, xie2018numerical, garcia2023pod}, parametric elliptic PDEs~\cite{kahlbacher2007galerkin}, and nonlinear dynamical systems~\cite{hinze2005proper}; see also \cite{luo2009mixed, chapelle2012galerkin, iliescu2014snapshot, kostova2018model, koc2021optimal}.

These analyses have primarily addressed the ability of POD and POD-ROMs to approximate solution states used to generate the POD basis. An {a priori} error analysis for the case of out-of-sample data, such as varying problem parameters or initial conditions, remains a largely open question. An important development for parametric problems, attempting to address this question, was the error analysis of POD-Greedy {and other greedy reduced basis} algorithms in terms of convergence rates of the Kolmogorov {$n$}-widths of the parameterized manifold of solutions~\cite{buffa2012priori, binev2011convergence, haasdonk2013convergence}.
The analysis assumes that the Kolmogorov {$n$}-widths of the solution manifold decay sufficiently fast and that the worst-error parameter search over the parametric domain is done accurately. In practice, the error in the greedy search is evaluated over a fine but finite subset of the parameter domain and with the help of {a posteriori} error bounds.

{Tensor methods, such as tensor cross approximations, have recently shown promising success in approximating solutions for parametric PDE problems; see, e.g.~\cite{schwab2011sparse,khoromskij2011tensor,ballani2015hierarchical,dolgov2015polynomial,nouy2017low,dolgov2019hybrid,glau2020low}.}
The application of tensor techniques also opens up new possibilities for both building reduced bases and performing error analyses of resulting Galerkin ROMs. The present paper contributes to the latter topic. Loosely speaking, the idea is that in the parametric setting, replacing SVD (or POD) with a low-rank tensor decomposition (LRTD) of the snapshot tensor allows us to retain and recover information about the variation of optimal reduced spaces with respect to parameters. This enables constructing parameter-specific reduced spaces for out-of-sample parameter values and developing the analysis by exploiting the {(local)} smoothness of the solution manifold, {while avoiding global characteristics such as Kolmogorov {$n$}-widths of the parametric solution manifold}. This also extends existing a priori error analyses of Galerkin ROMs to the parametric setting, covering out-of-sample parameter values.

The remainder of the paper is organized into six sections. Section~\ref{s:setup} formulates the problem and recalls a conventional POD approach to assist the reader in understanding the tensor ROM (or LRTD--ROM) as a generalization of the POD--ROM. Section~\ref{s:prelim} collects the necessary preliminaries of tensor linear algebra. The  LRTD--ROM is introduced in Section~\ref{s:TROM}. In Section~\ref{s:analysis}, we derive interpolation, approximation, and stability results that we need to further derive the error bound. The error estimate is proved in Section~\ref{s:error}. Section~\ref{s:num} illustrates the analysis with the results of several numerical experiments.

\section{Problem formulation}
\label{s:setup} 

We are interested in the error analysis of a  LRTD--ROM for parameter dependent parabolic problems. 
To formulate the problem, consider a bounded Lipschitz physical domain $\Omega\subset \mathbb{R}^d$ with $d=2,3$ and a compact parameter domain $\cA\subset \mathbb{R}^D$ with $D\ge 1$. 
Furthermore, we consider a linear parabolic equation: 
For a given $\balpha\in \cA$, find $u\in L^2(0,T;H^1_0(\Omega))$ such that  $u_t\in L^2(0,T;H^{-1}(\Omega))$ and
\begin{equation}
\label{eqn:GenericPDE}
u_t + \cL(\balpha)u =\blf_{\balpha},  \quad t \in (0,T), \quad \text{and}~ u|_{t=0} = u_0(\balpha),
\end{equation}
with a parameter dependent elliptic operator $\cL(\balpha):\, H^1_0(\Omega)\to H^{-1}(\Omega)$, right-hand side functional {$\blf_{\balpha}\in L^2(0,T;H^{-1}(\Omega))$}, and initial condition {$u_0(\balpha)\in L^2(\Omega)$}.
We assume homogeneous Dirichlet boundary conditions for $u$, which is not a restrictive simplification.   

It is convenient to define a bilinear form $a_{\balpha}(u,v)=\langle\cL(\balpha)u,v \rangle$ on $H^1_0(\Omega)\times H^1_0(\Omega)$, where the angle brackets denote {the} $L^2$-duality product for $H^{-1}\times H^1_0$. 
For the bilinear form we assume uniform ellipticity  and continuity:
\begin{equation}\label{a_cond}
	c_a\| v\|_1^2  \le a_{\balpha}(v,v),\qquad a_{\balpha}(u,v)\le C_a \|u\|_1\|v\|_1 \quad \forall\, u,v \in H^1_0(\Omega),
\end{equation}
with $\|v\|_1:=\|v\|_{H^1}$ and ellipticity and continuity constants, $c_a>0$, $C_a$, independent of $\balpha$. 

As our full-order model, we adopt a conforming finite element (FE) discretization of \eqref{eqn:GenericPDE}. However, other widely used discretization techniques would be equally suitable for the  LRTD--ROM. To this end, let's consider a shape-regular quasi-uniform triangulation $\cT_h$ of $\Omega$ with $h=\min_{T\in\cT_h}\mbox{diam}(T)$ and define the $H^1$-conforming finite element space of a polynomial degree $m$:
\[
V_h=\{v\in H^1_0(\Omega)\,:\, v|_T\in\mathbb{P}_m,~~\forall\,T\in\cT_h\},\qquad M=\mbox{dim}(V_h).
\]  
Let $u^0_h=I_h(u_0(\balpha))$, where $I_h:H^1_0(\Omega)\to V_h$ is an interpolation operator satisfying standard interpolation properties for {functions from the  Sobolev space $H^{m+1}(\Omega)$} (see, e.g., \cite{scott1990finite}).
A first order in time backward-Euler FE method reads:  For a given $\balpha\in \cA$, find $u_h^n\in V_h$ for $n=1,\dots,N$ solving
\begin{equation}
	\label{eqn:FEM}
	\Big(\frac{u_h^{n}-u_h^{n-1}}{\Delta t},v_h\Big)_0 + a_{\balpha}(u_h^n,v_h) =\blf_{\balpha}(v_h),\quad~   \forall\,v_h\in V_h,\quad \Delta t=T/N.
\end{equation}
Here further we use the shortcut notation for the $L^2(\Omega)$ inner product and norm, $(\cdot,\cdot)_0=(\cdot,\cdot)_{L^2(\Omega)}$ and $\|\cdot\|_0=\|\cdot\|_{L^2(\Omega)}$.

Let $t_n=n\Delta t$, $n=0,\dots,N$.
For smooth solutions to \eqref{eqn:GenericPDE} it is a textbook exercise to show the convergence estimate~\cite{thomee2007galerkin},
\begin{equation}\label{h_conv}
		\max_{n=0,\dots,N}\|u(t^n)-u^n_h\|^2_0 +\Delta t {\sum_{n=0}^{N}}
  \|u(t^n)-u^n_h\|^2_1\le C\,(\Delta t^2 + h^{2m}),
\end{equation}
with $C>0$  depending on ellipticity and continuity constants from \eqref{a_cond} and higher order Sobolev norms of the solution and its time derivatives. 
In particular, $C$ is   bounded independent of  $\balpha$ if the solution is uniformly bounded in the following sense: 
\begin{equation}\label{UnifEst}
	\sup_{\balpha\in\cA}\Big(\|u_0\|_{H^m(\Omega)} + \int_0^T(\|u_t\|_{H^{{m+1}}(\Omega)}+ \|u_{tt}\|_0)\,dt  \Big) < \infty.
\end{equation}
Hence, we assume the uniform regularity condition \eqref{UnifEst}. In Section~\ref{s:5.1}, we demonstrate that \eqref{UnifEst} holds if the data of the problem is sufficiently smooth.

\subsection{Conventional POD}
The conventional POD computes a representative collection 
of states,  referred to as snapshots, at times $t_j$ and for  selected values of parameters:
{\begin{equation}
u_h^j(\widehat\balpha_k) \in V_h, 
\quad j = 1,\ldots, N, \quad k = 1,\ldots,K.
\end{equation}
Here the snapshots $u_h^j(\widehat\balpha_k)$ 
denote the solutions of the full order model~\eqref{eqn:FEM} for
$\balpha = \widehat\balpha_k$, the parameter 
samples from the parameter domain
${\hcA} := \{\widehat\balpha_1, \dots, \widehat\balpha_K \} \subset \cA$.}
Next, using eigenvalues and eigenvectors of a snapshot {Gramian} matrix~\cite{holmes2012turbulence}, one finds a low-dimensional POD basis $\psi_h^i\in V_h$, $i=1,\dots,\ell\ll M$, that solves the minimization problem
\begin{equation}\label{pod_minim}
\begin{split}
&\min_{\{\psi_h^i\}_{i=1}^\ell} 
\sum_{j=1}^N\sum_{k=1}^{K}\Big\|
{u_h^j(\widehat\balpha_k)} - \sum_{i=1}^{\ell}\big(\psi_h^i,{u_h^j(\widehat\balpha_k)}\big)_\ast\psi_h^i\Big\|_\ast^2, \\
&\text{subject to }~ \big(\psi_h^i,\psi_h^j\big)_\ast=\delta^i_j.
\end{split}
\end{equation}
The standard choices for the  $\ast$-inner product and norm are {the} $L^2(\Omega)$ or $H^1(\Omega)$ ones.  

The solution to \eqref{pod_minim} can be interpreted as finding a subspace $V^{\rm pod}_\ell\subset V_h$, given by $V^{\rm pod}_\ell=\mbox{span} \big\{ \psi_h^1,\dots, \psi_h^\ell\big\}$, that approximates the space spanned by all observed snapshots in the best possible way (subject to the choice of the norm). 
 The Galerkin POD--ROM results from replacing $V_h$ by $V^{\rm pod}_\ell$ in \eqref{eqn:FEM}.

The POD reduced basis captures \emph{cumulative} information regarding the snapshots' dependence on the parameter vector $\balpha$. However, lacking parameter specificity, the basis may lack robustness for parameter values outside the sampling set and may necessitate a high dimension to accurately represent the solution manifold. This limitation poses challenges in the utilization and analysis of POD-based ROMs for parametric problems. To overcome this challenge, a tensor technique based on low-rank tensor decomposition was recently introduced~\cite{mamonov2022interpolatory,mamonov2023tensorial} with the goal of preserving information about parameter dependence in reduced-order spaces.

The LRTD approach can be interpreted as a multi-linear extension of POD. To see this, recall that POD can alternatively be defined as a low-rank approximation of the snapshot matrix. Consider a nodal basis denoted by $\{ \xi_h^i \}_{i=1}^{M}$ in the finite element space $V_h = \text{span}\{\xi_h^1, \dots, \xi_h^M\}$ and define the mass and stiffness finite element matrices $\rM$ and $\rL$, both of size $\mathbb{R}^{M\times M}$, with entries:
\[
\rM_{ij}=(\xi_h^j,\xi_h^i)_0\quad \text{and}\quad \rL_{ij}=(\xi_h^j,\xi_h^i)_1.
\]
Denote by the bold symbol $\bu^j(\widehat\balpha_k)\in\mathbb{R}^{M}$ 
the vector of expansion coefficients of
$u_h^j(\widehat\balpha_k)$ in the nodal basis.  
Consider now the \emph{matrix} of all snapshots
\begin{equation}
\label{eqn:Phi}
\Phi_{\rm pod} = [\bu^1(\widehat\balpha_1), \ldots, \bu^N(\widehat\balpha_1), \ldots,
\bu^1(\widehat\balpha_K), \ldots,\bu^N(\widehat\balpha_K)] \in \R^{M \times N K}
\end{equation}
and let $\hbs^i=\rM^{-\frac12}\bs^i$, where $\{\bs^i\}^\ell_{i=1}$ are the  first $\ell$ 
left singular vectors of $\rM^{\frac12}\Phi_{\rm pod}$. 
Straightforward calculations show that $\hbs^i\in \mathbb{R}^{M}$ are the coefficient vectors of the $L^2$-orthogonal POD basis in $V_h$,
{corresponding to $\ast=L^2$ in \eqref{pod_minim}},
i.e., $\psi_h^i=\sum_j (\hbs^i)_j\xi^j_h$.  The  coefficient vectors of the $H^1$-orthogonal POD basis
{corresponding to $\ast=H^1$ in \eqref{pod_minim}}, 
can be defined through similar calculations with $\rM$ replaced by $\rL$. 

By  the Eckart--Young theorem, the truncated SVD solves the minimization problem of seeking the optimal rank-$\ell$ approximation of a matrix in the spectral and Frobenius norms. When we apply this result within our context, we can interpret the   
POD as finding \emph{the best low-rank approximation} of the  snapshot matrix:
\begin{equation}
	\label{SVDopt}
\widehat\rS \widehat\rV^T =\mbox{arg}\hskip-1ex\min_{ \hskip-3ex {\rA\in\mathbb{R}^{M\times \rev{NK}} \atop  \text{rank}(\rA)\le\ell} }\|\Phi_{\rm pod}-\rA\|_{\ast\ast},\quad\text{with}~ \widehat\rS=[\hbs^1,\dots,\hbs^\ell]\in\mathbb{R}^{M\times\ell}~\text{and}~\widehat\rV^T\in  \mathbb{R}^{\ell\times \rev{NK}},
\end{equation}
with  $\|\cdot\|_{\ast\ast}=  \|\rM^{\frac12}\cdot\|_F$. 


The concept of the LRTD-ROM should now be more accessible to understand. Instead of arranging snapshots in a matrix $\Phi_{\rm pod}$  one seeks to exploit the natural (physical space--time--parameter space)   tensor structure of the snapshots domain and to utilize LRTD instead of matrix SVD for solving a tensor analogue of the low-rank approximation problem~\eqref{SVDopt}.

\begin{remark}\rm 
We  note that  for the quasi-uniform mesh, the matrix $\rM$ is spectrally equivalent to the scaled identity matrix $h^d \rI$,
\begin{equation}\label{Mh}
	c_0  h^d \|\bx\|^2_{\ell^2} \le \bx^T\rM\bx \le c_1  h^d 	\|\bx\|^2_{\ell^2}\quad \forall\, \bx\in\R^M,
\end{equation}
with $c_1,c_0>0$ independent of $h$ ($c_0,c_1$ may depend only on shape regularity of the triangulation and $m$). In this case {and for the purpose of analysis,} $\|\cdot\|_{\ast\ast}$ in \eqref{SVDopt} can be replaced by the Frobenius norm.
\end{remark}


\section{Multi-linear algebra preliminaries and LRTD} 
\label{s:prelim}

Assume that the parameter domain $\cA$ is the $D$-dimensional box 
\begin{equation}
	\cA = {\textstyle \bigotimes\limits_{i=1}^D} [\alpha_i^{\min}, \alpha_i^{\max}].
	\label{eqn:box}
\end{equation}  
Also, assume for a moment that the sampling set $\hcA\subset \cA$ is a Cartesian grid: we distribute $K_i$ nodes $\{\halpha_i^j\}_{j=1,\dots,K_i}$ 
within each of the intervals $[\alpha_i^{\min}, \alpha_i^{\max}]$ in \eqref{eqn:box} for $i=1,\dots,D$, 
and let
\begin{equation}
\label{eqn:grid}
	\hcA = \left\{ \bhalpha =(\halpha_1,\dots,\halpha_D)^T\,:\,
	\halpha_i \in \{\halpha_i^k\}_{k=1}^{K_i}, ~ i = 1,\dots,D \right\},\quad  K= \prod_{i=1}^{D} K_i.
\end{equation} 
 Instead of forming $\Phi_{\rm pod}$ as in \eqref{eqn:Phi},  snapshots are  organized in the \emph{multi-dimensional} array
\begin{equation}
(\bPhi)_{:,j,k_1,\dots,k_D} = \bu^j(\halpha_1^{k_1},\dots,\halpha_D^{k_D}), 
\label{eqn:snapmulti}
\end{equation}
which is a tensor of order $D+2$ and size $M\times N \times K_1\times\dots\times K_D$.  We reserve the first and second indices of $\bPhi$ for dimensions corresponding to the spatial and temporal resolution, respectively.

Unfolding of  $\bPhi$ is reordering  of its elements into a matrix. If all 1st-mode fibers of $\bPhi$, i.e. all vectors $(\bPhi)_{:,j, k_1,\dots,k_D}\in\R^M$,  are organized into columns of a  $M\times NK$ matrix, we get the \emph{1st-mode unfolding matrix}, denoted by $\bPhi_{(1)}$. A particular ordering of the columns in   $\bPhi_{(1)}$ is not important for our purposes. Note that $\Phi_{\rm pod}=\bPhi_{(1)}$ (up to columns permutations).
In the tensor ROM the (truncated) SVD of $\Phi_{\rm pod}$  is replaced with a (truncated) LRTD of $\bPhi$.

{Different notions of tensor ranks exist, and} there is an extensive literature addressing the {problem} of defining tensor rank(s) and LRTD; see e.g.~\cite{hackbusch2012tensor}. 
In \cite{mamonov2022interpolatory,mamonov2023tensorial}, the tensor ROM has been introduced and applied using three widely used LRTD techniques: canonical polyadic, Tucker, and tensor train (TT). These tensor decompositions, in all three low-rank formats, can be seen as extensions of the SVD to multi-dimensional arrays, each with distinct numerical and compression properties. {It is} important to note that the analysis presented in this paper is not contingent on the specific choice of LRTD. However, for the sake of illustration, we will consider the TT-LRTD.

In the TT format ~\cite{TT1}, 
one represents  $\bPhi$ by the following sum of outer products of $D+2$ vectors:
\begin{equation}
	\label{eqn:TTv}
	\bPhi \approx \widetilde{\bPhi} =
	\mbox{\small $\displaystyle \sum_{j_1=1}^{\widetilde R_1}\dots
	\sum_{j_{D+1}=1}^{\widetilde R_{D+1}}$}
	\bt^{j_1}_1 \otimes \bt^{j_1, j_2}_2 \otimes \dots \otimes \bt^{j_D, j_{D+1}}_{{D+1}} \otimes \bt^{j_{D+1}}_{D+2},
\end{equation}
with $\bt^{j_1}_1 \in \R^M$, $\bt^{j_1, j_{2}}_2 \in \R^N$ (space and time directions), and 
{$\bt^{j_i, j_{i+1}}_{i+1} \in \R^{K_{i-1}}$},  $\bt^{j_{D+1}}_{D+2} \in \R^{K_{D}}$ (directions in the parameter space),
{where the outer product is defined entrywise as
\begin{equation}
\begin{split}
\left[ \bt^{j_1}_1 \otimes \bt^{j_1, j_2}_2 \otimes \dots \otimes \bt^{j_D, j_{D+1}}_{D+1} \otimes \bt^{j_{D+1}}_{D+2} \right]_{i_1,i_2,\ldots,i_{D+1},i_{D+2}} = \\
= \left[ \bt^{j_1}_1 \right]_{i_1} \cdot 
\left[ \bt^{j_1, j_2}_2 \right]_{i_2} \cdot 
\ldots \cdot 
\left[ \bt^{j_D, j_{D+1}}_{D+1} \right]_{i_{D+1}} \cdot 
\left[ \bt^{j_{D+1}}_{D+2} \right]_{i_{D+2}},
\end{split}
\end{equation}
for $i_1 = 1,\ldots,M$, $i_2 = 1,\ldots,N$, $\ldots$,
$i_{D+1} = 1,\ldots,K_{D-1}$, $i_{D+2} = 1,\ldots,K_{D}$.}
The positive integers $\widetilde R_i$ are referred to as the {compression ranks} (or TT-ranks). 
{The best approximation of a tensor by a fixed-rank TT tensor always exists and a constructive algorithm  is known} to deliver a quasi-optimal solution~\cite{TT1}. Using this algorithm 
based on the truncated SVD for a sequence of unfolding matrices, one may find $\widetilde{\bPhi}$ in the TT format that satisfies
\begin{equation}
	\label{eqn:TensApproxF}
	\big\| \widetilde{\bPhi} - \bPhi \big\|_F \le  \widetilde\eps \big\|\bPhi \big\|_F
\end{equation}
for a given $\widetilde\eps > 0$. Corresponding TT ranks are then recovered in the course of {the} factorization. 
Here and further, $\|\bPhi \big\|_F$ denotes the tensor Frobenius norm, which is simply the  square root of the sum of the squares of all  entries of $\bPhi$.

By $\Phi_{(xt)}(k_1,\dots,k_D)\in \R^{M\times N}$ we denote a 'space--time' slice of {the} tensor $\bPhi$, 
\[
[\Phi_{(xt)}(k_1,\dots,k_D)]_{mj} =\bPhi_{m,j,k_1,\dots,k_D},
\]
and define another tensor norm
\begin{align*}
\norm{\bPhi}_0&=|\Delta t|^{\frac12}\max_{k_1,\dots,k_D}\|\rM^{\frac12}\Phi_{(xt)}(k_1,\dots,k_D)\|_F{.} \\
\end{align*}
%
With the help of \eqref{Mh}, one readily verifies that
\begin{equation}\label{aux375}
\norm{\bPhi}_0\le (\|\rM\|\Delta t)^{\frac12}\|\bPhi\|_F
\le c_1(h^d\Delta t)^{\frac12}\|\bPhi\|_F.
\end{equation}

For a sufficiently smooth solution to the parametric parabolic problem, the $\norm{\bPhi}_{0}$ norm is bounded in the following sense.   
\begin{lemma}\label{L:Phi_norm}
	Assume the solution of \eqref{eqn:GenericPDE} $u$ is sufficiently regular such that \eqref{UnifEst} is satisfied. 
	Then there holds
	\begin{equation}\label{eqn:normC}
		\norm{\bPhi}_0\le C,
	\end{equation}
	with a constant $C$ independent of $h$, $\Delta t$, and $\hcA$.
\end{lemma}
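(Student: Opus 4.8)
The plan is to rewrite $\norm{\bPhi}_0$ as a discrete space--time $L^2$ norm of the finite element solution and then control that quantity by combining the convergence estimate \eqref{h_conv} with the uniform regularity assumption \eqref{UnifEst}. By the definitions of $\rM$ and of the representation vectors $\bu^j(\bhalpha)$, the columns of the slice $\Phi_{(xt)}(k_1,\dots,k_D)$ are the coefficient vectors of $u_h^j(\bhalpha)$ for $\bhalpha=(\halpha_1^{k_1},\dots,\halpha_D^{k_D})$, so that $\|\rM^{\frac12}\Phi_{(xt)}(k_1,\dots,k_D)\|_F^2=\sum_{j=1}^N(\bu^j(\bhalpha))^T\rM\,\bu^j(\bhalpha)=\sum_{j=1}^N\|u_h^j(\bhalpha)\|_0^2$. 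Hence
\[
\norm{\bPhi}_0^2=\Delta t\max_{\bhalpha\in\hcA}\sum_{j=1}^N\|u_h^j(\bhalpha)\|_0^2\le \Delta t\sup_{\balpha\in\cA}\sum_{j=1}^N\|u_h^j(\balpha)\|_0^2,
\]
which already removes any dependence on the sampling set $\hcA$, and it remains to bound the right-hand side uniformly in $\balpha$, $h$, and $\Delta t$.

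To this end I would split $\|u_h^j(\balpha)\|_0\le\|u(t^j,\balpha)\|_0+\|u(t^j,\balpha)-u_h^j(\balpha)\|_0$ and use $(a+b)^2\le 2a^2+2b^2$. For the discretization error, \eqref{h_conv}---whose constant is $\balpha$-independent precisely because of \eqref{UnifEst}---gives $\|u(t^j,\balpha)-u_h^j(\balpha)\|_0^2\le C(\Delta t^2+h^{2m})$ for all $j$, hence $\Delta t\sum_{j=1}^N\|u(t^j,\balpha)-u_h^j(\balpha)\|_0^2\le CT(\Delta t^2+h^{2m})$, which stays bounded since $\Delta t\le T$ and $h\le\mathrm{diam}(\Omega)$. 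For the exact-solution part, writing $u(t,\balpha)=u_0(\balpha)+\int_0^t u_t(s,\balpha)\,ds$ and using $\|\cdot\|_0\le\|\cdot\|_{H^m}$ gives $\sup_{t\in[0,T]}\|u(t,\balpha)\|_0\le\|u_0(\balpha)\|_{H^m}+\int_0^T\|u_t(s,\balpha)\|_{H^m}\,ds$, which by \eqref{UnifEst} is bounded uniformly in $\balpha$, so $\Delta t\sum_{j=1}^N\|u(t^j,\balpha)\|_0^2\le T\sup_{t\in[0,T]}\|u(t,\balpha)\|_0^2\le C$. Adding the two contributions yields $\norm{\bPhi}_0^2\le C$ with $C$ independent of $h$, $\Delta t$, and $\hcA$, which is the claim.

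I do not foresee a genuine obstacle: the argument is routine, and the only point requiring attention is that \emph{every} constant be kept independent of $\balpha$ (hence of $\hcA\subset\cA$), which is exactly what \eqref{UnifEst} is designed to supply, both directly and through the $\balpha$-uniform constant in \eqref{h_conv}. As a self-contained alternative avoiding \eqref{h_conv}, one may instead test \eqref{eqn:FEM} with $v_h=u_h^n$, invoke coercivity from \eqref{a_cond} together with $(u_h^n-u_h^{n-1},u_h^n)_0\ge\tfrac12(\|u_h^n\|_0^2-\|u_h^{n-1}\|_0^2)$, sum over $n$, and bound $\|u_h^0\|_0=\|I_h u_0(\balpha)\|_0$ by interpolation stability; this yields $\Delta t\sum_{j}\|u_h^j(\balpha)\|_0^2\le\Delta t\sum_{j}\|u_h^j(\balpha)\|_1^2\le C$, again uniformly in $\balpha$ under the standing smoothness hypotheses.
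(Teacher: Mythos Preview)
Your proof is correct and follows essentially the same route as the paper: rewrite $\norm{\bPhi}_0^2$ as $\Delta t\max_{\hbalpha}\sum_j\|u_h^j(\hbalpha)\|_0^2$, split $u_h^j$ into the exact solution plus the FOM error, control the latter via \eqref{h_conv} (with $\balpha$-uniform constant thanks to \eqref{UnifEst}), and bound the exact-solution contribution using \eqref{UnifEst} directly. The only cosmetic difference is that the paper estimates $\Delta t\sum_j\|u(t^j)\|_0^2$ by comparing the Riemann sum to $\int_0^T\|u\|_0^2$ plus a correction involving $\tfrac{d}{dt}\|u\|_0^2$, whereas you use the simpler bound $\Delta t\sum_j\|u(t^j)\|_0^2\le T\sup_t\|u(t)\|_0^2$; both work equally well here.
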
 
\begin{proof} 
	For a fixed $\hbalpha\in\hcA$ denote by	$\bU(\hbalpha) = [\bu^1(\widehat\balpha), \ldots, \bu^N(\widehat\balpha)] \in \R^{M \times N}$ 
	a matrix of snapshots. 	Then by the definition of $\bPhi$ and $\norm{\cdot}_0$ there holds
	$		\norm{\bPhi}_0=|\Delta t|^{\frac12}\max_{\hbalpha\in\hcA}\|\rM^{\frac12}\bU(\hbalpha)\|_F.
	$
	Using the definition of the mass matrix,   \eqref{h_conv} and \eqref{UnifEst} we get
	\begin{equation*}
		\begin{split}
			\Delta t\|\rM^{\frac12}\bU(\hbalpha)\|_F^2& =\Delta t\sum_{j=1}^N\|\rM^{\frac12}\bu^j(\hbalpha)\|_{\ell^2}^2 =\Delta t\sum_{j=1}^N  \|u_h^j(\hbalpha)\|_0^2\\
			&\le 2\Delta t\sum_{j=1}^N  \|u(t^j,\hbalpha)\|_0^2 +2\Delta t\sum_{j=1}^N  \|u(t^j,\hbalpha)-u_h^j(\hbalpha)\|_0^2\\ 
   &{\le 2N \Delta t \max_{j=1,\dots,N}\|u(t^j,\hbalpha)\|_0^2}+ C\,(|\Delta t|^2 + h^{2m})\\
    &{\le 2T\sup_{t\in(0,T)}\|u(t,\hbalpha)\|_0^2} + C\,(|\Delta t|^2 + h^{2m}) \le C
	\end{split}
	\end{equation*}
where $C$ can be taken independent of $h$, $\Delta t$, and $\hbalpha\in\hcA$ .
\end{proof}

Further we assume that a low-rank  approximation of the snapshot tensor $\bPhi$ is available  to satisfy 
\begin{equation}
	\label{eqn:TensApprox}
	\norm{\widetilde{\bPhi} - \bPhi}_0 \le   {\eps}\norm{\bPhi}_0
\end{equation}
with some $\eps>0$. We note that thanks to \eqref{aux375}  this assumption can be practically satisfied with an  a priori given $\eps>0$ by letting in \eqref{eqn:TensApproxF} 
\begin{equation}\label{eps}
 \widetilde\eps = (\|\rM\|\Delta t)^{-\frac12}\norm{\bPhi}_0\|\bPhi\|_F^{-1\MO{\eps}}.
 \end{equation} 

We need a few more notations specific for tensors.
The $k$-mode tensor-vector product $\bPsi \times_k \ba$ of a tensor 
$\bPsi\in \R^{N_1 \times \dots \times N_m}$ of order $m$ and a vector $\ba \in \mathbb{R}^{N_k}$
is a tensor of order $m-1$ and size 
${\small N_1 \times \dots \times N_{k-1} \times N_{k+1} \times \dots \times N_m}$ resulting from the convolution of $\bPsi$ and $\ba$ alone the $k$-th mode. It can be also written as the linear combination of  $N_k$ tensors of size ${\small N_1 \times \dots \times N_{k-1} \times N_{k+1} \times \dots \times N_m}$,
\begin{equation}
	\bPsi \times_k \ba = \sum_{i = 1}^{N_k}a_{i} \bPsi_{:,i,:},
	\label{eqn:kmodeprod}
\end{equation}
where $\bPsi_{:,i,:}\in \R^{N_1 \times \dots \times N_{k-1} \times N_{k+1} \times \dots \times N_m}$  are the $k$-mode slices of $\bPsi$. The representation \eqref{eqn:kmodeprod} and the triangle inequality gives after simple calculation
\[
	\|\bPsi \times_k \ba\|_F \le \|\ba\|_{\ell^1} \max_{i = 1,\dots,N_k} \|\bPsi_{:,i,:}\|_F.
\]

Using the above estimate repeatedly  and the definition of {the} $ \norm{\bPhi}_0$ norm, one checks the following estimate
\begin{equation}\label{aux405}
	\begin{split}
{|\Delta t|^{\frac12}}\|\rM^{\frac12}(\bPhi 
\times_3 \bx^1\times_4 \bx^2 \dots \times_{D+2} \bx^D) \|_F&=
{|\Delta t|^{\frac12}}\|(\rM^{\frac12}\bPhi) 
\times_3 \bx^1\times_4 \bx^2 \dots \times_{D+2} \bx^D \|_F\\
& \le  \norm{\bPhi}_0
\|\bx^1\|_{\ell^1} \|\bx^2\|_{\ell^1} \dots \|\bx^D\|_{\ell^1},
\end{split}
\end{equation}
where $\rM^{\frac12}\bPhi$ denotes a tensor resulting from the multiplication of every 1st-mode fiber by  $\rM^{\frac12}$.


{
\begin{remark}\rm
The accuracy of the low-rank representation in \eqref{eqn:TensApprox} is the sole LRTD bound of importance for the a priori error analysis in this paper. This bound is not specific to any particular format, thus the subsequent analysis remains valid whether $\tbPhi$ is in CP, Tucker, TT, or another low-rank format. Naturally, certain implementation details and compression rates may vary depending on the format, as discussed in \cite{mamonov2022interpolatory}. Here, we employ TT-LRTD as an illustration. Investigating how the recovered compression/tensor ranks are influenced by $\epsilon$, properties of the parametrized PDE solutions, and the tensor format is an important subject that falls outside the scope of this paper. 
\end{remark}
}

\section{LRTD--ROM} 
\label{s:TROM} 
To formulate {the} LRTD--ROM, we need several further notions.
 In the parameter domain we assume an interpolation procedure 
\begin{equation}
	\label{eqn:bea}
	\bchi^i \,:\, \cA \to \mathbb{R}^{K_i},\quad i=1,\dots,D,
\end{equation}
such that for  any {continuous} function $g:\cA\to \R$,
\begin{equation}\label{Interp}
	I(g):= \sum_{k_1=1}^{K_1}\dots \sum_{k_D=1}^{K_D}  \left(\bchi^1 (\balpha ) \right)_{k_1}\dots \left(\bchi^D (\balpha ) \right)_{k_D}
	g(\halpha_1^{k_1},\dots,\halpha_D^{k_D})
\end{equation}
defines an interpolant for
$g$. We assume that the interpolation procedure is stable in the sense that
 \begin{equation}
 	\label{eqn:int_stab}
 	\sum_{j=1}^{K_i} \left| \left(\bchi^i (\balpha ) \right)_j \right| \le C_e,
 \end{equation}
 with some $C_e$ independent of $\balpha\in\cA$ and $i = 1, \ldots, D$.
One straightforward choice is  the Lagrange interpolation of order $p$: for any  $\balpha \in \cA$ let $\widehat{\alpha}_i^{i_1}, \ldots, \widehat{\alpha}_i^{i_p}$ 
be the $p$ closest grid nodes to $\alpha_i$ on $[\alpha_i^{\min}, \alpha_i^{\max}]$, for $i=1,\ldots,D$.
Then 
\begin{equation}
	\label{eqn:lagrange}
	\big(\bchi^i (\balpha)\big)_j = 
	\begin{cases} 
		\prod\limits_{\substack{m = 1, \\ m \neq k}}^{p}(\widehat{\alpha}_i^{i_m}-\alpha_i) \Big/ 
		\prod\limits_{\substack{m = 1, \\ m \neq k}}^{p}(\widehat{\alpha}_i^{i_m}-\widehat{\alpha}_i^j), 
		& \text{if } j = i_k \in \{i_1,\ldots,i_p\}, \\
		\qquad\qquad\qquad\qquad\qquad\qquad\qquad 0, & \text{otherwise}, \end{cases}
\end{equation}
are the entries of $\bchi^i (\balpha)$ for $j=1,\dots,n_i$.

With the help of \eqref{eqn:bea} we introduce the `local' low-rank matrix 
$\widetilde{\Phi} (\balpha)$ via the in-tensor interpolation procedure for tensor $ \widetilde{\bPhi} $:
\begin{equation}
	\label{eqn:extractbt}
	\widetilde{\Phi} (\balpha) = \widetilde{\bPhi} 
	\times_3 \bchi^1(\balpha) \times_4 \bchi^2(\balpha) \dots \times_{D+2} \bchi^D(\balpha) 
	\in \R^{M \times N}.
\end{equation}
If $\balpha = \bhalpha \in \hcA$ belongs to the sampling set, then $\bchi^i(\hbalpha)$ encodes the position of $\widehat{\alpha}_i$ 
among the grid nodes on $[\alpha^{\min}_i, \alpha^{\max}_i]$. Therefore, for $\eps=0$ the  
matrix $\widetilde{\Phi} (\hbalpha)$ is  exactly the matrix of all  
snapshots for the particular $\hbalpha$. For a general $\balpha\in \cA$ the matrix
$\widetilde{\Phi} (\balpha)$ is the result of interpolation between pre-computed snapshots. This interpolation is done directly in a low-rank format.

For an arbitrary given $\balpha \in \cA$  the parameter-specific \emph{local} reduced space $V^\ell(\balpha)$ of dimension $\ell$ is the space spanned by the first  $\ell$ left singular vectors of $\widetilde{\Phi} (\balpha)$:
\begin{equation}
\label{Vrom}V^\ell(\balpha)=\mbox{range}(\rS(\balpha)(1:\ell)),\quad\text{with}~ \{\rS(\balpha),\Sigma(\balpha), \rV(\balpha)\}=\text{SVD}(\widetilde{\Phi} (\balpha)).
\end{equation}
The corresponding {subspace of $V_h$} is denoted by  $V_h^\ell(\balpha)$, i.e., 
\begin{equation}
	\label{Vromh}
V_h^\ell(\balpha)=\{v_h\in V_h\,:\, v_h=\sum_{i=1}^M\xi^i_hv_i,~~\text{for}~(v_1,\dots,v_M)^T\in V^\ell(\balpha)\}.
\end{equation}
Denote by $P_\ell$ the $L^2$-orthogonal projection in $ V_h^\ell(\balpha)$.
The reduced order model then consists in projecting \eqref{eqn:GenericPDE} in  $V_h^\ell(\balpha)$:  For the given $\balpha\in \cA$, find $u_\ell^n\in V_h^\ell(\balpha)$ for $n=1,\dots,N$, solving
\begin{equation}
	\label{eqn:ROM}
	\Big(\frac{u_\ell^{n}-u_\ell^{n-1}}{\Delta t},v_\ell\Big)_0 + a_{\balpha}(u_\ell^n,v_\ell) =\blf_{\balpha}(v_\ell),\quad~   \forall\,v_\ell\in V_h^\ell(\balpha),\quad u^0_\ell=  P_\ell u_0.
\end{equation}

\begin{remark}[Implementation] \label{Rem:Impl}  \rm	
The implementation of the Galerkin LRTD-ROM \eqref{eqn:ROM} follows a two-stage algorithm, as described in \cite{mamonov2022interpolatory, mamonov2023tensorial}.

In the first ({offline}) stage, a low-rank approximation $\widetilde\bPhi$ of the snapshot tensor $\bPhi$ is computed. If {it is} feasible to calculate the full tensor $\bPhi$, standard algorithms like TT-SVD (for TT format), HOSVD (for Tucker format), or ALS (for CP format) can be used to find $\widetilde{\bPhi}$. Otherwise, methods based on low-rank tensor interpolation (e.g., \cite{TT2,ballani2013black,dolgov2019hybrid}) or completion (e.g., \cite{bengua2017efficient,grasedyck2019stable,chen2021auto}) can be applied. These methods work with a sparse sampling of the parameter domain (i.e., the full order model is executed for only a small percentage of $\hcA$). This first stage defines the \emph{universal reduced space} $\widetilde{U}$, which is the span of all 1st-mode fibers of the compressed tensor:
\begin{equation}\label{Univ}
	\widetilde{U} = \text{range}\big(\widetilde\Phi_{(1)}\big).
\end{equation}
For the TT-format, the dimension of $\widetilde{U}$ is equal to $\widetilde R_1$, which is the first compression rank of $\widetilde\bPhi$.  The column vectors $\{\bt^{j}_1\}_{j=1}^{R_1}$ from \eqref{eqn:TTv} form an orthonormal basis in $\widetilde{U}$. {This basis is stored offline.} During the first stage, the system \eqref{eqn:GenericPDE} is projected into the universal space and passed to the second stage, along with the compressed tensor $\widetilde\bPhi$ (in fact, only a part of $\widetilde\bPhi$ is required for online computations).

For any parameter $\balpha\in\cA$, the local ROM space $V^\ell(\balpha)$ is a subspace of $\widetilde{U}$. In the second (online) stage, we find an orthogonal basis in $V^\ell(\balpha)$ by its coordinates in the universal space. The projection of the system into the local space $V^\ell(\balpha)$ and other computations in the online stage are conducted using low-dimensional objects. Moreover, for any new parameter $\bbeta\in\cA$, the coordinates of $V^\ell(\bbeta)$ and the projected system \eqref{eqn:ROM} are recomputed in the online stage using fast (low-dimensional) calculations. {This online stage requires storing projected finite element $\widetilde R_1\times\widetilde R_1$ matrices and a total of $\widetilde R_{D+1}K_D+ \sum^{D}_{i=2}\widetilde R_iK_{i-1}\widetilde R_{i+1}$ real parameters from the TT-LRTD. This \emph{storage} requirement is comparable to that of full POD-ROM, but for new parameter values it enables recomputing parameter-specific systems without relying on offline storage.
}  
For further details on implementation, including a LRTD hyper-reduction technique for nonlinear terms in multi-parameter parabolic problems, refer to \cite{mamonov2022interpolatory,mamonov2023tensorial}.
\end{remark}

\begin{remark}[Interpolation procedure] \rm 
In the TT format, the component-wise interpolation procedure \eqref{eqn:extractbt} simplifies to calculating $\widetilde R_{D+1}+ \sum^{{D}}_{i=2}\widetilde R_i\widetilde R_{i+1}$ inner products between the vectors $\bt_{i>2}^{\cdot,\cdot}$ and sparse vectors $\bchi^j$. Recall that $\widetilde R_{i+1}$ represents the compression ranks. This computation is straightforward to implement and highly computationally efficient. Similar observations hold for  LRTD--ROMs that utilize CP or Tucker LRTD, as discussed in \cite{mamonov2022interpolatory}.

With that said, one can explore various more general interpolation methods in the parameter domain. For instance, {it is} possible to extend \eqref{eqn:bea} to the {tensor-valued} mapping:
\begin{equation*}
	\mathbf{X} \,:\, \cA \to \mathbb{R}^{K_1\times\dots\times K_D}, 
\end{equation*}
and define $\widetilde{\Phi} (\balpha)$ as a convolution of $\tbPhi$ and 
$\mathbf{X}$. 
\end{remark}

\begin{remark}[LRTD vs POD ROMs] \rm	
The universal reduced-order space $\widetilde{U}_h$, which is the FE counterpart of $\widetilde{U}$, shares similarities with the POD space in that both approximate the space spanned by \emph{all} observed snapshots. In fact, if TT-SVD is applied to find $\widetilde\bPhi$ from the complete snapshot tensor, it can be shown that $\widetilde{U}_h = V_\ell^{\rm pod}$ holds true when $\ell=\widetilde R_1$. One key distinction is that in the LRTD-ROM, the universal space is not involved in online computations, and its dimension can be relatively large, corresponding to smaller values of $\varepsilon$ in \eqref{eqn:TensApprox}.

Utilizing LRTD for the snapshot tensor, rather than POD for the snapshot matrix, allows us to determine the  \emph{parameter-specific subspace} $V^\ell_h(\balpha)$ for any incoming parameter $\balpha\in\cA$. Moreover, the dimension of $V^\ell_h(\balpha)$ can be much smaller than the dimension of $\widetilde{U}_h$. This dimension reduction is a key factor in determining the reduced computational complexity of \eqref{eqn:ROM}. For certain parametric problems, Galerkin LRTD-ROMs have shown a dramatic increase in efficiency compared to Galerkin POD-ROMs, as observed in \cite{mamonov2022interpolatory, mamonov2023tensorial}.

Furthermore, we will see that the use of LRTD also enables an error analysis of \eqref{eqn:ROM}, which is not readily available for the Galerkin POD-ROM. This added capability enhances our understanding and control of the error in the reduced-order model.
\end{remark}

\section{Approximation estimate} \label{s:analysis} We are interested in estimating the norm of $\bu-\bu_\ell$, which represents the error of the LRTD--ROM solution. An estimate for the FOM error $\bu-\bu_h$ is provided by \eqref{h_conv}. Therefore, it is sufficient to estimate the norm of $\bu_h-\bu_\ell$. A key result is an approximation estimate for the FOM solution in the local reduced order space. This estimate, discussed in this section, is expressed in terms of the tensor compression accuracy from \eqref{eqn:TensApprox}, characteristics of the mesh in the parameter domain, and the eigenvalues of a snapshot {Gramian} matrix for a \emph{fixed} parameter value. To establish this estimate, we first need some auxiliary results.

\subsection{$\balpha$-uniform estimates for the FOM solution}\label{s:5.1}
We need a certain smoothness of the FOM  solution $u_h$  with respect to parameters. More specifically,  we are interested to show that for an integer $p>0$
there holds $u_h^k(\bx) \in C^p({\cA})$ for all $\bx\in\Omega$ and $k=1,\dots,N$, and 
\begin{equation}
	\label{eqn:Assu}
  	 {\sup_{\balpha\in\cA}\,\sum_{n=1}^N\Delta t \sum_{|\mathbf{j}|\le p}\left\|D^{\mathbf{j}}_\alpha u_h^n\right\|_{L^\infty(\Omega)}^2} \le C_u,	 
\end{equation}
where 
$C_u$ is independent of the discretization parameters,
{$\mathbf{j} = [j_1, j_2, \ldots, j_D]^T \in (\mathbb{Z}^+ \cup \{0\})^D$ with $|\mathbf{j}| = \sum_{i=1}^{D} j_i$, and
\begin{equation}
D^{\mathbf{j}}_\alpha = 
\frac{\partial^{|\mathbf{j}|}}{\partial \alpha_1^{j_1} \partial \alpha_2^{j_2} \ldots \partial \alpha_D^{j_D}}.
\end{equation}}

In this section we show that \eqref{eqn:Assu} holds once the problem data is smooth and the mesh meets certain assumptions. 
We start by recalling a regularity result~\cite[section~IV.9]{ladyzhenskaia1988linear}  for the linear parabolic equation~\eqref{eqn:GenericPDE} with  a general  elliptic  operator $\cL(\balpha)$,
\begin{equation}\label{GenPrab}
	\cL(\balpha)= -\sum_{i,j=1}^{d} a_{ij}(x,t,\balpha)\frac{\partial^2 }{\partial x_ix_j}+\sum_{i=1}^{d} a_{i}(x,t,\balpha)\frac{\partial}{\partial x_i}+  a_0(x,t,\balpha),
\end{equation} 	
and  $\blf_{\balpha} = f(x,t,\balpha)$:
Assume that {the coefficients are $C^\infty(\overline{Q}_T\times\cA)$  functions}, with $\overline{Q}_T=\overline\Omega\times[0,T]$, {the matrix $A(x,t,\balpha)=\{a_{ij}(x,t,\balpha)\}$ is uniformly positive definite in $\overline{Q}_T\times\cA$}, $\partial\Omega$ is smooth, and $f\in W_q^{2\ell-2,\ell-1}(Q_T)$, with $q>1$, and an integer $\ell>0$, $u_0\in  W_q^{2\ell-\frac2q}(\Omega)\cap H^1_0(\Omega)$. 
{Hereafter we denote by $W^{2\ell,\ell}_q(Q_T)$ a Banach space of functions $u\in L_q(Q_T)$ such that all partial derivatives $D^s_xD^r_t u\in L_q( Q_T)$, for $2r+s\le 2\ell$ (see \cite{ladyzhenskaia1988linear} for further details).}
Then for every  $\balpha\in\cA$ the problem \eqref{eqn:GenericPDE} with homogeneous Dirichlet boundary condition, has the unique solution from $W^{2\ell,\ell}_q(Q_T)$, and 
\begin{equation}\label{GenEst}
\|u\|_{W^{2\ell,\ell}_q(Q_T)}\le C\,\left(\|f\|_{W_q^{2\ell-2,\ell-1}(Q_T)}+
\|u_0\|_{W_q^{2\ell-\frac2q}(\Omega)}\right),
\end{equation} 	
{with $C$ independent of $\balpha$.}

 Assume now {the} right hand side and initial condition  in \eqref{GenPrab} {depend} smoothly on {the} parameter $\balpha$, {i.e. $\frac{\partial f}{\partial\alpha_k}\in { W_q^{2\ell-2,\ell-1}(Q_T)}$ and  $\frac{\partial  u_0}{\partial\alpha_k}\in W_q^{2\ell-\frac2q}(\Omega)$} for $k=1,\dots,D$, then a  partial derivative $v=\frac{\partial}{\partial\alpha_k} u$ satisfies 
\begin{equation}\label{GenPrab1}
\begin{split}
	&\frac{\partial v}{\partial t}+\cL(\balpha) v =f_1\quad \text{with}~ f_1=\frac{\partial f}{\partial\alpha_k} - \Big(\frac{\partial}{\partial\alpha_k}\cL(\balpha)\Big) u,\\
	&\text{and}\quad \frac{\partial}{\partial\alpha_k}\cL(\balpha)= -\sum_{i,j=1}^{d} \frac{\partial a_{ij}}{\partial\alpha_k} \frac{\partial^2 }{\partial x_ix_j}+\sum_{i=1}^{d} \frac{\partial a_{i}}{\partial\alpha_k}\frac{\partial }{\partial x_i}+ \frac{\partial a_0}{\partial\alpha_k},
\end{split}
\end{equation} 	 
and $v|_{t=0}=\frac{\partial}{\partial\alpha_k} u_0$. Thanks to the above regularity result and \eqref{GenEst}, the right-hand side in \eqref{GenPrab1} is an element of $W_q^{2\ell-2,\ell-1}(Q_T)$ and there holds 
\begin{multline}\label{GenEst1}
	\left\|\frac{\partial u}{\partial\alpha_k}\right\|_{W^{2\ell,\ell}_q(Q_T)}\le C\,\left(\|f_1\|_{ W_q^{2\ell-2,\ell-1}(Q_T)}+
\left\|\frac{\partial  u_0}{\partial\alpha_k}\right\|_{ W_q^{2\ell-\frac2q}(\Omega)}\right)\\
\le C\,\left(\|f\|_{W_q^{2\ell-2,\ell-1}(Q_T)}+ \|u_0\|_{W_q^{2\ell-\frac2q}(\Omega)}+
\left\|\frac{\partial f}{\partial\alpha_k}\right\|_{ W_q^{2\ell-2,\ell-1}(Q_T)}+ \left\|\frac{\partial  u_0}{\partial\alpha_k}\right\|_{ W_q^{2\ell-\frac2q}(\Omega)}\right) .
\end{multline} 	
Applying the same argument repeatedly, one proves 
 \begin{equation}\label{GenEst2}
 	\begin{split}
 		\|D_\alpha^{\mathbf{j}} u\|_{ W^{2\ell,\ell}_q(Q_T)}&\le C\,\sum_{|\mathbf{j}|\le p'} \left(\|D_\alpha^{\mathbf{j}} f\|_{ W_q^{2\ell-2,\ell-1}(Q_T)}+ \|D_\alpha^{\mathbf{j}} u_0\|_{ W_q^{2\ell-\frac2q}(\Omega)}\right) 
 \end{split}
\end{equation} 	
for all partial derivatives with respect to parameters of order $|\mathbf{j}|\le p'$ with any finite integer $p'$. The constant $C$ in \eqref{GenEst2} depends on $p'$,  H\"older norms of coefficients $a_{i,j},a_i$ in \eqref{GenPrab} and its derivatives with respect to $\balpha$. Therefore, if $f$ and $u_0$ are smooth so that the norms at the right-hand side of \eqref{GenEst2} are uniformly bounded in $\balpha$, then we get
\begin{equation}\label{Aux633}
\sup_{\balpha\in\cA}\sum_{|\mathbf{j}|\le p'}\left\|D^{\mathbf{j}}_\alpha u\right\|_{W^{2\ell,\ell}_q(Q_T)}=C(p')< \infty.
\end{equation} 
For $j=0$, $q=2$, $2\ell\ge \max\{m,4\}$ this estimate implies \eqref{UnifEst}.
If in \eqref{Aux633} we allow $\ell=1$ and $q>(d+2)/2$, then $D^{\mathbf{j}}_\alpha u$ are H\"older-continuous in $x$ and $t$, and in particular there holds
$	\sup\limits_{\balpha\in\cA}\sum\limits_{|\mathbf{j}|\le p'}\left\|D^{\mathbf{j}}_\alpha u\right\|_{L^\infty(Q_T)}< \infty$.

Extending $L^\infty$-estimates to finite element solutions of PDEs  is a technically challenging problem extensively addressed in the literature (see, e.g., \cite[section 6]{thomee2007galerkin} and references therein). However, we are not aware of results for FEM for parametric PDEs, which would imply \eqref{eqn:Assu}. Thus we give below a simple argument, which demonstrates   \eqref{eqn:Assu} for the solutions of \eqref{eqn:FEM} if {$\Delta t \le c\,  |\ln h|^{-\frac12}$ (for $d=2$) or  $\Delta t \le c\, h^{\frac12}$ (for $d=3$) holds, }
the PDE solution satisfies a uniform regularity {estimate} with respect to $\balpha$ as in \eqref{Aux633}, and the dependence of $\cL(\alpha)$ on $\alpha$ is smooth in the following sense: $D_\alpha^{\mathbf{j}} \cL(\balpha)\in \cL(H^1,H^{-1})$ for all $\balpha\in\cA$ and    
\begin{equation}\label{Lsmooth}
\sup_{\balpha\in\cA}\|D_\alpha^{\mathbf{j}} \cL(\balpha)\|_{H^1\to H^{-1}} < \infty\quad \text{for all}~|\mathbf{j}|\le p.
\end{equation}
This assumption is fulfilled by $\cL$ of the general form \eqref{GenPrab} with the coefficients 
{$a_{ij}$, $a_i$ and $a_0$} that 
are {$C^\infty(\overline{Q}_T\times\cA)$}  functions.

First, given the basis $\{\xi_i\}^M_{i=1}$ in $V_h$, the FEM {problem} \eqref{eqn:FEM} can be written as a sequence of linear algebraic systems with matrices and right-hand sides smoothly depending on $\balpha$. Hence the solution also depends smoothly on $\balpha$ and we can differentiate it. Furthermore, note that   $D^{\mathbf{j}}_\alpha u_h^n$ is an element of $V_h$ for any index $\mathbf{j}$. 

Differentiating \eqref{eqn:GenericPDE} and \eqref{eqn:FEM} with respect to a parameter, we find that the difference of  partial derivatives  $\te_h^n= \frac{\partial}{\partial\alpha_k}(u(t_n) -u_h^n)$ satisfies  
\begin{multline}
	\label{eqn:FEM1}
	\Big(\frac{\te_h^{n}-\te_h^{n-1}}{\Delta t},v_h\Big)_0 + a_{\balpha}(\te_h^n,v_h)\\  =\Big(\frac{\partial u_t(t_n)}{\partial\alpha_k} -\frac{\frac{\partial}{\partial\alpha_k} \big(u(t_n)-u(t_{n-1})\big)}{\Delta t},v_h\Big)_0 - \Big\langle\frac{\partial}{\partial\alpha_k}\cL(\balpha)(u(t_n)-u_h^n),v_h\Big\rangle,\quad~   \forall\,v_h\in V_h.
\end{multline}
The first term on the right-hand side of \eqref{eqn:FEM1} is a standard $O(\Delta t)$-consistency term.  To bound the second, we apply the {Cauchy-Schwarz} inequality, \eqref{Lsmooth} and the convergence estimate \eqref{h_conv} to show
\begin{equation*}
\begin{split}
\Delta t\sum_{n=1}^{N}\Big\langle \Big(\frac{\partial}{\partial\alpha_k}&\cL(\balpha)(u(t_n)-u_h^n),v_h^n\Big)\Big\rangle
\le \Delta t\sum_{n=1}^{N}\Big\|\frac{\partial}{\partial\alpha_k}\cL(\balpha)\Big\|_{H^1\to H^{-1}}\|u(t_n)-u_h^n\|_1\|v_h^n\|_1\\
&\le C \Delta t \Big(\sum_{n=1}^{N}\|u(t_n)-u_h^n\|_1^2\Big)^{\frac12}
 \Big(\sum_{n=1}^{N}\|v_h^n\|_1^2\Big)^{\frac12}
 \le C \Big( \Delta t + h^m\Big)
 \Big(\sum_{n=1}^{N}\Delta t\|v_h^n\|_1^2\Big)^{\frac12},
\end{split}
\end{equation*}
with a constant $C$ independent of $\alpha$, $h$ and $\Delta t$. Therefore, {the} same textbook analysis  of proving \eqref{h_conv} gives 
\begin{equation}\label{h_conv1}
	\max_{n=0,\dots,N}\Big\|\frac{\partial}{\partial\alpha_k} (u(t^n)- u^n_h)\Big\|^2_0 +\Delta t \sum_{n=1}^{N}\Big\|\frac{\partial}{\partial\alpha_k}( u(t^n)-u^n_h)\Big\|^2_1\le C\,(\Delta t^2 + h^{2m}),
\end{equation}
for all first order partial derivatives of the FOM error $u(t^n)- u^n_h$. 
{The} constant $C$ depends on Sobolev norms of time and space derivatives of $\frac{\partial}{\partial\alpha_k} u$. It is uniformly bounded in $\balpha$ if 
\begin{equation}\label{UnifEst1}
	\sup_{\balpha\in\cA}\Big(\Big\| \frac{\partial u_0}{\partial\alpha_k} \Big\|_{H^m(\Omega)} + \int_0^T\Big(\Big\|\frac{\partial u_t}{\partial\alpha_k} \Big\|_{H^{{m+1}}(\Omega)}+ \Big\|\frac{\partial u_{tt}}{\partial\alpha_k} \Big\|_0\Big)\,dt  \Big) < \infty
\end{equation}
holds. In turn, to satisfy \eqref{UnifEst1} it is sufficient to assume the corresponding uniform bound for the initial condition and \eqref{Aux633} with $p'=1$, $q=2$ and $2\ell\ge\max\{m+1,4\}$.

The bound in \eqref{h_conv1} resembles the one in \eqref{h_conv}, but for the derivatives. 
Therefore,  using the same argument repeatedly, one shows 
\begin{equation}\label{h_conv2}
	\max_{n=0,\dots,N}\|D_\alpha^{\mathbf{j}} (u(t^n)- u^n_h)\|^2_0 +\Delta t \sum_{n=1}^{N}\|D_\alpha^{\mathbf{j}}( u(t^n)-u^n_h)\|^2_1\le C\,(\Delta t^2 + h^{2m}),
\end{equation}
for all partial derivatives with respect to parameters of order $|\mathbf{j}|\le p$. A constant $C$ is  independent of  $\balpha$, $h$ and $\Delta t$ once \eqref{Aux633} holds with $p'=p$, $q=2$ and $2\ell\ge\max\{m+1,4\}$.

Denote by $P_H$ the $L^2$ projection into the FE space $V_h$. Using the $L^\infty$-stability of $P_H$, which holds for quasi-uniform meshes \cite{douglas1974stability,crouzeix1987stability}, an $L^\infty$ finite element inverse inequality, we get
{
\begin{equation*}
	\begin{split}
\|D_\alpha^{\mathbf{j}} u^n_h\|_{L^\infty(\Omega)} &\le 
\|D_\alpha^{\mathbf{j}} u^n_h-P_H D_\alpha^{\mathbf{j}} u(t^n)\|_{L^\infty(\Omega)}+\|P_H D_\alpha^{\mathbf{j}} u(t^n)\|_{L^\infty(\Omega)}\\
&\le
C\,\big(\ell_h \|D_\alpha^{\mathbf{j}} u^n_h-P_H D_\alpha^{\mathbf{j}} u(t^n)\|_1+\| D_\alpha^{\mathbf{j}} u(t^n)\|_{L^\infty(\Omega)}\big)\\
&\le
C\,\big(\ell_h\|D_\alpha^{\mathbf{j}} (u^n_h- u(t^n))\|_1+ \ell_h\|(1-P_H) D_\alpha^{\mathbf{j}} u(t^n)\|_1+\| D_\alpha^{\mathbf{j}} u(t^n)\|_{L^\infty(\Omega)}\big),
\end{split}
\end{equation*}
with $\ell_h=\left\{\small \begin{array}{ll} |\ln h|^{\frac12}, & d=2 \\  h^{-\frac12}, & d=3\end{array}\right.$ and some $C$ independent of  $\balpha$, $h$ and $\Delta t$. 
Summing up and applying $\|(1-P_H) D_\alpha^{\mathbf{j}} u(t^n)\|_1\le c\,h^m$ and \eqref{h_conv2} gives
\[
\sum_{n=1}^N\Delta t \|D_\alpha^{\mathbf{j}} u^n_h\|_{L^\infty(\Omega)}^2 \le C\,\big(\ell_h^2(\Delta t^2 + h^{2m})+1\big)\le C\,\big(\ell_h^2\Delta t^2 +1\big),
\]
The estimate \eqref{eqn:Assu} follows if $\ell_h\Delta t\le c$.  
}
We proved the following theorem. 

\begin{theorem}\label{Th:param} Consider a family of  quasi-uniform triangulations of $\Omega$. 
Assume that mesh parameters { satisfy $\Delta t \le c\,  |\ln h|^{-\frac12}$ (for $d=2$) or  $\Delta t \le c\, h^{\frac12}$ (for $d=3$)}. Assume the elliptic operator in \eqref{eqn:GenericPDE} has the form \eqref{GenPrab},
	$a_{ij}$, $a_i$, $a_0$ are {$C^\infty(\overline{Q}_T\times\cA)$  functions}, and  right hand side $f$ and initial condition $u_0$ are {sufficiently smooth  functions such that the norms at the right-hand side of \eqref{GenEst2} make sense and are uniformly bounded on $\cA$}. Then \eqref{eqn:Assu} holds with any finite integer $p\ge0$.   
\end{theorem}

{The assumption on $\Delta t$ in the theorem above is not restrictive. Furthermore,   for higher order in time numerical methods, it is further relaxed. For example, for BDF2 implicit method the assumption reads $\Delta t^2 \le c\,  |\ln h|^{-\frac12}$ (for $d=2$) or  $\Delta t^2 \le c\, h^{\frac12}$ (for $d=3$).}

\subsection{An interpolation result}
For the sampling set $\hcA$ in the parameter domain  define the mesh step for the sampling in each  parameter direction:
\begin{equation}
	\delta_i = \max\limits_{1 \leq j  \leq K_i-1} \left| \widehat{\alpha}_j^i - \widehat{\alpha}_{j+1}^i \right|, 
	\quad i = 1,\ldots,D,\quad\text{and let}~ \delta^p = \sum_{i=1}^{D} \delta_i^p,~ p>0.
\end{equation}
We assume that the interpolant defined in \eqref{Interp} is of order $p$ accurate in the following sense: For a {$p$ times continuously differentiable} function $g:\cA\to \R$ it holds
\begin{equation}
	\label{eqn:int_aprox}
	\sup_{\balpha\in\cA} 
	\Big| \big(g - I(g)\big)(\balpha) \Big|\le
	C_a \delta^p \| g \|_{C^p(\cA)}. 
\end{equation}
For the example of linear  interpolation between the nodes in each parameter direction one has $p=2$  and the bounds \eqref{eqn:int_aprox} and \eqref{eqn:int_stab} can be shown to hold with $C_a = \frac18$ and $C_e = 1$.

 For an arbitrary but fixed parameter $\balpha\in\cA$ denote by $\bu^k(\balpha) \in \mathbb{R}^M$ the nodal coefficients vector for the  FOM  finite element solution $u_h^k(\balpha)$ and let 
 {$\rU(\balpha) \in \R^{M \times N}$}
be the corresponding snapshot matrix,
{as defined in the proof of Lemma~\ref{L:Phi_norm}}.
Of course, the snapshots for out-of-sample parameters
may not be practically available, so the matrix $\rU(\balpha)$ serves for the purposes of analysis only. We have the following
result. 
\begin{lemma}\label{L1}
 For the local  matrix $\widetilde{\Phi} (\balpha)$ defined in \eqref{eqn:extractbt} and  $\rU(\balpha)$ defined above, the following estimate holds
	\begin{equation}\label{est_PhiPsi}
	|\Delta t|^{\frac12}\sup_{\balpha\in\cA}\|\rM^{\frac12}\big(\widetilde{\Phi} (\balpha)- \rU(\balpha)\big)\|_F \le C \left(\eps + \delta^p\right),
	\end{equation}
with $C$ independent of $\Delta t$,  $h$, $\delta$ and $\eps$.
\end{lemma}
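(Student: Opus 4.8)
The plan is to compare $\widetilde{\Phi}(\balpha)$ with the exact snapshot matrix $\bU(\balpha)$ through the intermediate matrix obtained by applying the in-tensor interpolation \eqref{eqn:extractbt} to the \emph{exact} snapshot tensor,
\[
\Phi(\balpha) \;:=\; \bPhi \times_3 \bchi^1(\balpha) \times_4 \cdots \times_{D+2} \bchi^D(\balpha) \in \R^{M\times N},
\]
and to split, by the triangle inequality,
\[
\widetilde{\Phi}(\balpha) - \bU(\balpha) \;=\; \big(\widetilde{\bPhi} - \bPhi\big)\times_3\bchi^1(\balpha)\times_4\cdots\times_{D+2}\bchi^D(\balpha) \;+\; \big(\Phi(\balpha) - \bU(\balpha)\big).
\]
The first term will be controlled by the LRTD accuracy \eqref{eqn:TensApprox}, the second by the interpolation accuracy \eqref{eqn:int_aprox} together with the $\balpha$-uniform parameter regularity \eqref{eqn:Assu}.

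For the first (compression) term I would apply the bound \eqref{aux405} with $\bPhi$ replaced by $\widetilde{\bPhi}-\bPhi$ and $\bx^i=\bchi^i(\balpha)$, then use the interpolation stability \eqref{eqn:int_stab} to absorb the factors $\|\bchi^i(\balpha)\|_{\ell^1}\le C_e$, and finally \eqref{eqn:TensApprox} and Lemma~\ref{L:Phi_norm} together with $|\Delta t|\le T$. This shows
\[
|\Delta t|^{\frac12}\,\big\|\rM^{\frac12}\big((\widetilde{\bPhi}-\bPhi)\times_3\bchi^1(\balpha)\times_4\cdots\times_{D+2}\bchi^D(\balpha)\big)\big\|_F \;\le\; C\,\eps,
\]
with $C$ independent of $\balpha$, $h$, $\Delta t$, $\delta$, $\eps$.

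For the second (interpolation) term the key observation is that the parameter-domain interpolation \eqref{Interp} commutes with evaluation against the (parameter-independent) finite element basis: the $j$-th column of $\Phi(\balpha)$ is the coefficient vector of
\[
I(u_h^j)(\balpha) \;:=\; \sum_{k_1=1}^{K_1}\cdots\sum_{k_D=1}^{K_D}\big(\bchi^1(\balpha)\big)_{k_1}\cdots\big(\bchi^D(\balpha)\big)_{k_D}\, u_h^j\big(\cdot,\halpha_1^{k_1},\dots,\halpha_D^{k_D}\big)\in V_h,
\]
so that, at every point $\bx\in\Omega$, $I(u_h^j)(\balpha)(\bx)$ equals the scalar interpolant of $\balpha\mapsto u_h^j(\bx,\balpha)$. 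Therefore \eqref{eqn:int_aprox} can be applied pointwise in $\bx$; combined with the uniform-in-$\balpha$ bound \eqref{eqn:Assu} (valid by Theorem~\ref{Th:param}) it gives, for $e_h^j(\balpha):=I(u_h^j)(\balpha)-u_h^j(\balpha)$,
\[
\sup_{\balpha\in\cA}\big\|e_h^j(\balpha)\big\|_{L^\infty(\Omega)} \;\le\; C_a\,\delta^p\, \sup_{\bx\in\Omega}\big\|u_h^j(\bx,\cdot)\big\|_{C^p(\cA)} \;\le\; C_a\,C_u\,\delta^p \qquad (j=1,\dots,N).
\]
Since the $j$-th column of $\rM^{\frac12}(\Phi(\balpha)-\bU(\balpha))$ has Euclidean norm $\|e_h^j(\balpha)\|_0\le|\Omega|^{\frac12}\|e_h^j(\balpha)\|_{L^\infty(\Omega)}$, summing the squares over $j=1,\dots,N$ and using $N\Delta t=T$ yields
\[
|\Delta t|^{\frac12}\,\sup_{\balpha\in\cA}\big\|\rM^{\frac12}\big(\Phi(\balpha)-\bU(\balpha)\big)\big\|_F \;\le\; (T|\Omega|)^{\frac12}\,C_a\,C_u\,\delta^p,
\]
and adding the two estimates proves \eqref{est_PhiPsi}.

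I expect the only genuine subtlety to be this last step: recognizing that the in-tensor (parameter) interpolation and the spatial/finite element evaluation commute, which is precisely what lets the scalar estimate \eqref{eqn:int_aprox} be transferred to the matrix-valued quantity $\Phi(\balpha)-\bU(\balpha)$ and coupled with \eqref{eqn:Assu}. Everything else — the treatment of the compression term and the $\rM^{\frac12}$-weighted, $\Delta t$-scaled Frobenius-norm bookkeeping — is routine, and since the hard analytic input \eqref{eqn:Assu} has already been established in Theorem~\ref{Th:param}, the proof of Lemma~\ref{L1} amounts to an assembly of earlier results.
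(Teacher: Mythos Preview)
Your proof is correct and follows essentially the same approach as the paper: the same intermediate matrix $\Phi(\balpha)$, the same triangle-inequality splitting, and the same treatment of the compression term via \eqref{aux405}, \eqref{eqn:int_stab}, \eqref{eqn:TensApprox}, and Lemma~\ref{L:Phi_norm}. The only cosmetic difference is in the interpolation term: the paper evaluates at the Lagrange nodes, bounds the entries $(\bu^k(\balpha))_m$ via \eqref{eqn:Assu}, and then converts to the $\rM^{1/2}$-weighted Frobenius norm using the spectral equivalence \eqref{Mh} together with $Mh^d\le c$; you instead bound $\|e_h^j(\balpha)\|_0\le |\Omega|^{1/2}\|e_h^j(\balpha)\|_{L^\infty(\Omega)}$ directly, which is slightly cleaner (it bypasses \eqref{Mh} and the quasi-uniformity bound) but amounts to the same argument.
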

\begin{proof} 
For a fixed $\balpha\in\cA$ we define an auxiliary matrix  $	\Phi  (\balpha)\in\R^{M\times N}$, which is the interpolation of
space--time slices in the full snapshot tensor: 
	\begin{equation} \label{aux556}
		\Phi  (\balpha) = \bPhi \times_3 \bchi^1 (\balpha) \times_4 \bchi^2(\balpha) \dots \times_{D+2} \bchi^D(\balpha),
	\end{equation}
	and proceed using the triangle inequality
	\begin{equation}
		\label{eqn:aux438}
		\big\|\rM^{\frac12}\big( \bU (\balpha) - \widetilde{\Phi} (\balpha) \big)\big\|_F \le 
		\big\|\rM^{\frac12}\big( \bU (\balpha) - \Phi  (\balpha) \big)\big\|_F + 
		\big\|\rM^{\frac12}\big( \Phi (\balpha) - \widetilde{\Phi}  (\balpha) \big)\big\|_F.
	\end{equation}
Estimates  \eqref{eqn:int_stab}, \eqref{aux405}, and \eqref{eqn:TensApprox} help us to bound the second term
at the right-hand side of \eqref{eqn:aux438},
	\begin{equation}
		\label{eqn:aux442}
		\begin{split}
		{|\Delta t|^{\frac12}} \left\| \rM^{\frac12}\big( \Phi  (\balpha) - \widetilde{\Phi} (\balpha) \big)\right\|_F & =
		{|\Delta t|^{\frac12} }\left\|\rM^{\frac12} (\bPhi - \widetilde{\bPhi}) 
			\times_3 \bchi^1 (\balpha) \times_4 \bchi^2(\balpha) \dots \times_{D+2} \bchi^D(\balpha) \right\|_F \\
			& \le \norm{\bPhi - \widetilde{\bPhi}}_0
			\| \bchi^1 (\balpha) \|_{\ell^1} \| \bchi^2(\balpha) \|_{\ell^1} \dots \| \bchi^D (\balpha) \|_{\ell^1} \\
			& \le (C_e)^D \norm{\bPhi - \widetilde{\bPhi}}_0
			\le (C_e)^D \eps\norm{\bPhi}_0 \le C\eps,
		\end{split}
	\end{equation}
with $C$ independent of $\balpha$, $\eps$ and discretization parameters.

Note the following identity
\[
D^{\bj}_\alpha u_h^n(\balpha,\bx_i)
=\sum_{j=1}^{M}D^{\bj}_\alpha(\bu^n(\balpha))_j\xi_h^j(\bx_i)=D^{\bj}_\alpha(\bu^n(\balpha))_i,
\]
where  we used an observation that $\xi_h^j(\bx_i)=\delta^i_j$ for the Lagrange nodes $\bx_i$ of the spatial triangulation. 
This and  \eqref{eqn:Assu} imply the estimate  
{
\begin{equation}\label{eqn:aux617}
\sum_{n=1}^N\Delta t \sum_{ |\bj|\le p}|D^{\bj}_\alpha (\bu^n(\balpha))_i|^2= 
\sum_{n=1}^N\Delta t\sum_{ |\bj|\le p} |D^{\bj}_\alpha u_h^n(\balpha,\bx_i)|^2\le C_u,
\end{equation}
}
where $C_u$ from  \eqref{eqn:Assu} is independent of $\balpha\in\cA$, and $m$. 

To handle the first term on the left-hand side of \eqref{eqn:aux438}, we  use the identity
\[
\forall\,\balpha\in\cA: \quad
I\big( (\bu^n(\balpha))_i \big)
= \Phi  (\balpha)_{i,n} 
\]
that follows from  \eqref{Interp} and \eqref{aux556}.
We use it together with \eqref{Mh}, \eqref{eqn:int_aprox}, and  \eqref{eqn:aux617} to compute
	\begin{equation}
	\label{eqn:aux450}
\begin{split}
	\Delta t \|\rM^{\frac12}\big(\rU(\balpha)-{\Phi} (\balpha)\big)\|_F ^2&\le 
	c_1\Delta t\, h^d \|\rU(\balpha)-{\Phi} (\balpha)\|_F ^2 \\
& =
 c_1 \Delta t\, h^d \sum_{n=1}^{N} \sum_{i=1}^{M}\big|(\bu^n(\balpha))_i- I\big((\bu^n(\balpha)_i)\big|^2  \\
& \le c_1 \Delta t\, h^d \sum_{n=1}^{N} \sum_{i=1}^{M} C_a^2\delta^{2p}|(\bu^n(\balpha))_i\big|^2_{C^p(\cA)}
 \le {c_1 C_u}  C_a^2 (M h^d)\delta^{2p}.
\end{split}
	\end{equation}
For quasi-uniform triangulations $M h^d\le c$, with $c$ depending only on shape regularity of the triangulation and $m$.
 The result in \eqref{est_PhiPsi} now follows from \eqref{eqn:aux438}, \eqref{eqn:aux442}, and \eqref{eqn:aux450}.
\end{proof}

\subsection{Gramian matrices and singular values} For an arbitrary but fixed parameter $\balpha\in\cA$ the $L^2$-Gramian matrix (sometimes refereed to as $L^2$-correlation matrix) $\rK(\balpha)\in\R^{N\times N}$ is 
\[
\rK(\balpha)=\left(\rK(\balpha)_{i,j}\right),\quad \rK(\balpha)_{i,j}=\frac1N \big(u_h^j(\balpha),u_h^i(\balpha)\,\big)_0.
\] 
From the definition of the mass matrix we have $\big(u_h^j(\balpha),u_h^i(\balpha)\,\big)_0=(\rM\bu^j(\balpha),\bu^i(\balpha))$, which implies $\rK(\balpha)=N^{-1} \rU(\balpha)^T\rM\rU(\balpha)$. Therefore,  the eigenvalues $\lambda_1(\balpha)\ge \lambda_2(\balpha)\ge\dots\ge \lambda_N(\balpha)\ge0$ of $\rK(\balpha)$ and singular values of $\rU(\balpha)$ are related by
\begin{equation}\label{LambdaSigma}
\lambda_i(\balpha)=N^{-1}\sigma_i^2(\rM^{\frac12}\rU(\balpha)).
\end{equation}

We define
	\begin{equation}\label{Lambda}
\Lambda_\ell = \sup_{\balpha\in\cA}\Big(\sum_{i=\ell}^{N}\lambda_i(\balpha)\,\Big).
	\end{equation}
There  clearly holds  $\Lambda_1\ge \Lambda_2\ge\dots\ge \Lambda_N\ge0$, and $\Lambda_1<\infty$ follows from $\sup_{\MO{n}}\sup\limits_{\balpha\in\cA}\|u_h^n(\balpha)\|_0<\infty$. 

\begin{remark}[$\Lambda_\ell$ vs {the singular values of $\Phi_{\rm pod}$}]\rm
A standard analysis of a Galerkin POD--ROM with reduced dimension $\ell$ involves the quantity $\sum\limits_{i>\ell} \sigma_i^2(\Phi_{\rm pod})$ on the right-hand side of an error estimate. {It can be characterized  through the best approximation of observed states by an $\ell$-dimensional subspace of $V_h$ in the following sense: 
\[
h^d\sum\limits_{i>\ell} \sigma_i^2(\Phi_{\rm pod})\simeq \sum\limits_{i>\ell} \sigma_i^2(M^{\frac12}\Phi_{\rm pod})
=\inf_{ \begin{array}{c} 
    \scriptstyle W\subset V_h \\[-0.8ex]
    \scriptstyle \text{dim}(W)=\ell
\end{array}} \sum_{\balpha\in\widehat\cA}\,\sum_{n=1}^N\|u_h^n(\balpha)- \rP_{W}u_h^n(\balpha)\|^2_0,
\]
where $ \rP_{W}$ is the $L^2$-orthogonal projector on $W$.
}
This quantity has a close connection to the Kolmogorov $\ell$-width of the parameterized manifold of solutions,  cf., e.g.,~\cite{bachmayr2017kolmogorov,unger2019kolmogorov}. 
In contrast, $\Lambda_{\ell+1}$ corresponds to the best possible approximation, maximized over 
$\cA$, of a  \emph{single} trajectory  in an $\ell$-dimensional subspace of $V_h$. The best approximation  may vary depending on $\balpha$:
\footnote{For completeness we include the proof. For a fixed $\balpha\in\cA$, we have thanks to \eqref{LambdaSigma} and the Eckart-Young theorem:
\[
N\sum\limits_{i>\ell}\lambda_i(\balpha)=\sum\limits_{i>\ell}\sigma_i^2(\rM^{\frac12}\rU(\balpha))=\inf_{C\in\R^{M\times N}\atop \text{rank}(C)\le\ell}\|\rM^{\frac12}\rU(\balpha)-C\|_F^2\le \inf_{\rW\in\R^{M\times \ell}}\|\rM^{\frac12}\rU(\balpha)-\rM^{\frac12}\rW\rW^T\rM\rU(\balpha)\|_F^2 .
\]
If we restrict the infimum to such $\rW\in\R^{M\times \ell}$ that are $\rM$-orthogonal, i.e. $\rW^T \rM \rW=\rI$, we have 
\[
\sum\limits_{i>\ell}\sigma_i^2(\rM^{\frac12}\rU(\balpha))\le \inf_{\rW\in\R^{M\times \ell} \atop \rW^TM\rW=\rI}\|\rM^{\frac12}\rU(\balpha)-\rM^{\frac12}\rW\rW^T\rM\rU(\balpha)\|_F^2 = \inf_{ \begin{array}{c} 
    \scriptstyle W\subset V_h \\[-0.8ex]
    \scriptstyle \text{dim}(W)=\ell
\end{array}}\sum_{n=1}^N\|u_h^n(\balpha)- \rP_{W}u_h^n(\balpha)\|^2_0.
\]
However, with the particular choice  of $\rW=\rM^{-\frac12}[\bs_1(\rM^{\frac12}\rU(\balpha)),\dots,\bs_\ell(\rM^{\frac12}\rU(\balpha))]$, where $\bs_k(\rM^{\frac12}\rU(\balpha))$ is the $k$-th left singular vector of  $\rM^{\frac12}\rU(\balpha)$, the last inequality becomes the equality which proves the result in \eqref{eq:aux903}.
}

\begin{equation}\label{eq:aux903}
\Lambda_{\ell+1}
=\sup_{\balpha\in\cA} \inf_{ \begin{array}{c} 
    \scriptstyle W\subset V_h \\[-0.8ex]
    \scriptstyle \text{dim}(W)=\ell
\end{array}} \frac1N\sum_{n=1}^N\|u_h^n(\balpha)- \rP_{W}u_h^n(\balpha)\|^2_0.
\end{equation}
If the variation of {the} solution with respect to parameters is strong, one may expect $\Lambda_\ell$ {to decay (much) faster than} $\sum\limits_{i>\ell+1} \sigma_i^2(\Phi_{\rm pod})$.  
{We note that a relevant statistics for characterizing  $\Lambda_\ell$ can be the nonlinear Kolmogorov width introduced in~\cite{temlyakov1998nonlinear}, but we do not pursue this connection in the present paper.}
\end{remark}

Denote by $\tsigma_1(\balpha)\ge \tsigma_2(\balpha)\ge\dots\ge\tsigma_{\hat N}(\balpha)\ge0$, $\hat N=\min\{N,M\}$, the singular values of the local matrix $\widetilde{\Phi} (\balpha)$ defined in \eqref{eqn:extractbt}. 
We have the following 
result.
\begin{lemma}\label{L2}
	The following estimate holds
	\begin{equation}\label{est_singular}
		\Delta t\, h^d \sup_{\balpha\in\cA}\sum_{i=\ell}^{\hat N}\tsigma_i^2(\balpha)\le C\Big(\Lambda_\ell + \eps^2 + \delta^{2p}\Big),\quad \ell=1,\dots,\hat N,
	\end{equation}
with $C$ independent of $\Delta t$, $h$, $\delta$, $\eps$, and $\ell$. 
\end{lemma}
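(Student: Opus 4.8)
The plan is to compare the singular values of the local matrix $\widetilde\Phi(\balpha)$ with those of the true snapshot matrix $\bU(\balpha)$ using a Weyl-type perturbation argument, then relate $\sum_{i\ge\ell}\sigma_i^2(\rM^{1/2}\bU(\balpha))$ to $\Lambda_\ell$ via the identity \eqref{LambdaSigma}. First I would work with the weighted matrices $\rM^{1/2}\widetilde\Phi(\balpha)$ and $\rM^{1/2}\bU(\balpha)$, so that $\tsigma_i(\balpha)$ in \eqref{est_singular} should really be read (up to the standard spectral-equivalence constants \eqref{Mh}) as $\sigma_i(\rM^{1/2}\widetilde\Phi(\balpha))$; note that $\Delta t\, h^d \sum_{i\ge\ell}\tsigma_i^2(\balpha)$ is comparable to $\Delta t\sum_{i\ge\ell}\sigma_i^2(\rM^{1/2}\widetilde\Phi(\balpha))$ by \eqref{Mh}. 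The key linear-algebra tool is the inequality for singular values of a sum: for any two matrices $A,B$ of the same size and any $\ell$,
\[
\Big(\sum_{i\ge\ell}\sigma_i^2(A+B)\Big)^{1/2}\le \Big(\sum_{i\ge\ell}\sigma_i^2(A)\Big)^{1/2}+\|B\|_F,
\]
which follows from Mirsky's theorem (the map $A\mapsto (\sigma_\ell(A),\sigma_{\ell+1}(A),\dots)$ is $1$-Lipschitz from the Frobenius norm to $\ell^2$, since $\big(\sum_{i\ge\ell}\sigma_i^2(A)\big)^{1/2}=\min\{\|A-R\|_F:\operatorname{rank}(R)\le \ell-1\}$ by Eckart–Young).

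Applying this with $A=\rM^{1/2}\bU(\balpha)$ and $B=\rM^{1/2}(\widetilde\Phi(\balpha)-\bU(\balpha))$ gives
\[
\Big(\sum_{i\ge\ell}\sigma_i^2(\rM^{1/2}\widetilde\Phi(\balpha))\Big)^{1/2}\le \Big(\sum_{i\ge\ell}\sigma_i^2(\rM^{1/2}\bU(\balpha))\Big)^{1/2}+\big\|\rM^{1/2}(\widetilde\Phi(\balpha)-\bU(\balpha))\big\|_F.
\]
Now multiply through by $|\Delta t|^{1/2}$, square, and use $(x+y)^2\le 2x^2+2y^2$. The first term becomes $\Delta t\sum_{i\ge\ell}\sigma_i^2(\rM^{1/2}\bU(\balpha))=N\Delta t\sum_{i\ge\ell}\lambda_i(\balpha)=T\sum_{i\ge\ell}\lambda_i(\balpha)$ by \eqref{LambdaSigma} and $N\Delta t=T$, which after taking $\sup_{\balpha\in\cA}$ is bounded by $T\Lambda_\ell$ via the definition \eqref{Lambda}. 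The second term is exactly $|\Delta t|\,\big\|\rM^{1/2}(\widetilde\Phi(\balpha)-\bU(\balpha))\big\|_F^2$, whose supremum over $\balpha$ is bounded by $C(\eps+\delta^p)^2\le 2C(\eps^2+\delta^{2p})$ by Lemma \ref{L1}. Finally, converting the left-hand side from the $\rM$-weighted Frobenius norm back to the plain one via \eqref{Mh} produces the factor $h^d$ and yields \eqref{est_singular} with a constant $C$ depending only on $T$ and the shape-regularity/spectral-equivalence constants, hence independent of $\Delta t,h,\delta,\eps,\ell$.

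The only real obstacle is making the singular-value perturbation bound rigorous in the $\sum_{i\ge\ell}$ (Schatten-type) rather than in a single $\sigma_\ell$: this is handled cleanly by the variational characterization $\big(\sum_{i\ge\ell}\sigma_i^2(A)\big)^{1/2}=\operatorname{dist}_F(A,\{\operatorname{rank}\le\ell-1\})$ and the triangle inequality for $\operatorname{dist}_F$, so it is not a serious difficulty. Everything else — the weighting by $\rM^{1/2}$, the spectral equivalence \eqref{Mh}, the identity \eqref{LambdaSigma}, and the bound from Lemma \ref{L1} — is routine bookkeeping of constants, and I would present it compactly.
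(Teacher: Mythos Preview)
Your proposal is correct and follows essentially the same route as the paper: a Mirsky/Weyl perturbation bound, Lemma~\ref{L1} for the perturbation term, \eqref{LambdaSigma} for the $\Lambda_\ell$ term, and the spectral equivalence \eqref{Mh} to handle the $h^d$ weight. The only cosmetic difference is that the paper applies Mirsky's theorem directly to the \emph{unweighted} matrices $\widetilde\Phi(\balpha),\bU(\balpha)$ and then uses the elementary inequality $\tfrac12 a^2-b^2\le(a-b)^2$ componentwise, whereas you first pass to the $\rM^{1/2}$-weighted matrices and invoke the Eckart--Young variational characterization plus triangle inequality; your packaging is a touch cleaner but the content is identical.
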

\begin{proof} 
Since the  {Frobenius} norm is unitarily invariant, the result from \cite[Th.5]{mirsky1960symmetric} applied to matrices $\widetilde{\Phi} (\balpha)$ and $\rU(\balpha)$ yields the estimate
\begin{equation}\label{aux705}	
	\|\mbox{diag}\big(\tsigma_1(\balpha)-\sigma_1(\rU(\balpha)),\dots,\tsigma_{\hat N}(\balpha)-\sigma_{\hat N}(\rU(\balpha))\big)\|_F\le \|\widetilde{\Phi} (\balpha)- \rU(\balpha)\|_F.
\end{equation}
Therefore, using $\tfrac{1}2a^2-b^2\le |a-b|^2$  we get for any integer  $1\le \ell\le \hat N$,
\begin{equation}\label{aux710}
	\begin{split}
			\sum_{i=\ell}^{\hat N} \left[\frac12\tsigma_i^2(\balpha)-\sigma_i^2(\rU(\balpha))\right]
			&\le \sum_{i=\ell}^{\hat N} \left[\tsigma_i(\balpha)-\sigma_i(\rU(\balpha))\right]^2 \\
			&\le\|\widetilde{\Phi} (\balpha)- \rU(\balpha)\|^2_F
			 \le 
C\,|\Delta t|^{-1} h^{-d}(\eps + \delta^p)^2,
	\end{split}	
\end{equation}
where the last inequality is implied by \eqref{Mh} and \eqref{est_PhiPsi}. The estimate \eqref{est_singular} follows from \eqref{aux710} 	and the relation \eqref{LambdaSigma} between $\lambda$-s and $\sigma_i^2(\rU(\balpha))$:
\[
\sigma_i^2(\rU(\balpha))\le  \|\rM^{-\frac12}\|^2\sigma_i^2(\rM^{\frac12}\rU(\balpha))
=\|\rM^{-\frac12}\|^2 N \lambda_i(\balpha)
\le c\,h^{-d} |\Delta t|^{-1}\lambda_i(\balpha),
\]
where we used $\sigma_i(AB)\le\|A\|\sigma_i(B)$ inequality for matrix singular values together with \eqref{Mh}  and $\Delta t =T/N$.
\end{proof}


\subsection{Approximation estimate}
Similar to the analysis of {the} POD--ROM for parabolic problems (see, e.g.~\cite{kunisch2001galerkin}), we need
an inverse inequality in a snapshot space. For a fixed $\balpha\in\cA$, define the space 
\[
V_h(\balpha)=\mbox{span}\{u_h^1(\balpha),\dots,u_h^N(\balpha)\}
\]  
and let $c_{\rm inv}(\balpha)$ be the best constant from the inverse inequality
\begin{equation}\label{Inv}
\|v_h\|_1\le c_{\rm inv}(\balpha)\|v_h\|_0\quad \forall\,v_h\in V_h(\balpha)+\widetilde{U}_h,
\end{equation}
where $\widetilde{U}_h$ is the FE counterpart of the (low-dimensional) universal space in \eqref{Univ}. We define
\[
C_{\rm inv}=\sup_{\balpha\in\cA}c_{\rm inv}(\balpha).
\]
The FE inverse inequality implies $C_{\rm inv}\le c\, h^{-1}$, with $c>0$ independent of $h$. However, the $O(h^{-1})$ bound on $C_{\rm inv}$ can be pessimistic. 

Recall {that $\rP_\ell$ denotes} the $L^2$-orthogonal projection from $V_h$ into the local space $V_h^\ell(\balpha)$. 
The following theorem is the main result of this section. 

\begin{theorem}
	\label{Th1}
	Let  $u_h(t,\balpha)$ be the FOM solution of the finite element problem \eqref{eqn:FEM}  for  $\balpha \in \cA$. The following uniform in $\balpha$ approximation {estimate holds}
	\begin{align}
		\label{eqn:aprox1}
		\sup_{\balpha\in\cA}\Delta t \sum_{n=1}^{N} 
		\left\| u_h^n( \balpha) -  \rP_\ell u_h^n(\balpha) \right\|^2_{1} 
		\le  \tilde C\,C_{\rm inv}\Big(\eps^2 + \delta^{2p}+ \Lambda_{\ell+1}\Big) , 
	\end{align}
with $\tilde C$ independent of $h$, $\Delta t$, $\delta$, $\ell$, and $\eps$.
\end{theorem}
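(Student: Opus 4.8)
The plan is to bound the $H^1$-error of the $L^2$-orthogonal projection $\rP_\ell$ onto the local space $V_h^\ell(\balpha)$ by combining an $L^2$-error estimate with the inverse inequality \eqref{Inv}. First I would note that $V_h^\ell(\balpha)=\mbox{range}(\rS(\balpha)(1:\ell))$ is spanned by the leading left singular vectors of $\widetilde\Phi(\balpha)$, so the natural quantity to control is the projection error of the columns $\bu^j(\balpha)$ of $\bU(\balpha)$. The triangle inequality splits this as $u_h^j(\balpha)-\rP_\ell u_h^j(\balpha)$ into (i) the distance from $\bu^j(\balpha)$ to the columns of $\widetilde\Phi(\balpha)$ — which is $V_h^\ell(\balpha)$-independent and is controlled uniformly in $\balpha$ by Lemma~\ref{L1}, giving a term $C(\eps+\delta^p)$ after the $\Delta t^{1/2}\rM^{1/2}$ scaling — and (ii) the projection error of the columns of $\widetilde\Phi(\balpha)$ onto the span of their own first $\ell$ left singular vectors, which by the Eckart--Young/Schmidt theorem equals $\sum_{j}\|\cdot\|^2 = \sum_{i\ge\ell+1}\tsigma_i^2(\balpha)$. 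Because $\rP_\ell$ is the $L^2$-orthogonal projection (hence optimal in $L^2=\rM^{1/2}$-weighted $\ell^2$), the actual projection error of $\bu^j(\balpha)$ is no larger than either of these two reference quantities up to the same bound.

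Second, I would assemble the $L^2$ estimate: $\Delta t\sum_j\|u_h^j(\balpha)-\rP_\ell u_h^j(\balpha)\|_0^2 \le \Delta t\,\|\rM^{1/2}(\bU(\balpha)-\widetilde\Phi(\balpha))\|_F^2 + \Delta t\sum_{i\ge\ell+1}\tsigma_i^2(\rM^{1/2}\widetilde\Phi(\balpha))$. Wait — care is needed because the SVD in \eqref{Vrom} is that of $\widetilde\Phi(\balpha)$, not of $\rM^{1/2}\widetilde\Phi(\balpha)$, so $\rP_\ell$ (being $L^2$- i.e. $\rM$-orthogonal) is actually optimal with respect to the $\rM$-weighted inner product, meaning the correct second term is governed by $\tsigma_i^2(\rM^{1/2}\widetilde\Phi(\balpha))$. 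I would instead redo the splitting consistently in the $\rM^{1/2}$-metric: let $\bz^j=\rM^{1/2}\bu^j(\balpha)$, project onto the space $\rM^{1/2}V_h^\ell(\balpha)$; then $\|\bz^j-(\text{proj})\bz^j\|_{\ell^2}\le\|\bz^j-\rM^{1/2}\widetilde\Phi(\balpha)_{:,j}\|_{\ell^2}+\|(\text{proj of }\rM^{1/2}\widetilde\Phi(\balpha)_{:,j})\|_{\ell^2}$, and since $\rP_\ell$ is built to be optimal in this metric, summing over $j$ and using Eckart--Young on $\rM^{1/2}\widetilde\Phi(\balpha)$ together with Lemma~\ref{L1} gives $\Delta t\sum_j\|u_h^j(\balpha)-\rP_\ell u_h^j(\balpha)\|_0^2 \le C(\eps+\delta^p)^2 + \Delta t\sum_{i\ge\ell+1}\tsigma_i^2(\rM^{1/2}\widetilde\Phi(\balpha))$, uniformly in $\balpha$. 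Then I bound $\Delta t\sum_{i\ge\ell+1}\tsigma_i^2(\rM^{1/2}\widetilde\Phi(\balpha))$ via the analogue of Lemma~\ref{L2} — which actually uses exactly these $\rM^{1/2}$-weighted singular values and relates them to $\Lambda_{\ell+1}$ plus $\eps^2+\delta^{2p}$ (here the $h^d$ factors from \eqref{Mh} already live inside $\norm{\cdot}_0$, so no stray powers of $h$ appear). This yields an $L^2$-bound of the form $\sup_\balpha\Delta t\sum_j\|u_h^j(\balpha)-\rP_\ell u_h^j(\balpha)\|_0^2\le C(\eps^2+\delta^{2p}+\Lambda_{\ell+1})$.

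Third, I pass from $L^2$ to $H^1$. The key point is that $u_h^j(\balpha)-\rP_\ell u_h^j(\balpha)$ lies in $V_h(\balpha)+\widetilde U_h$: indeed $u_h^j(\balpha)\in V_h(\balpha)$ by definition, and $\rP_\ell u_h^j(\balpha)\in V_h^\ell(\balpha)\subset\widetilde U_h$ since the local space is a subspace of the universal space (as recorded in Remark~\ref{Rem:Impl}). Hence the inverse inequality \eqref{Inv} applies with constant $c_{\rm inv}(\balpha)\le C_{\rm inv}$, giving $\|u_h^j(\balpha)-\rP_\ell u_h^j(\balpha)\|_1^2\le C_{\rm inv}^2\|u_h^j(\balpha)-\rP_\ell u_h^j(\balpha)\|_0^2$; summing and taking the supremum over $\balpha$ produces the stated bound with $\tilde C\, C_{\rm inv}$ — note the estimate as written carries $C_{\rm inv}$ to the first power, so one either absorbs a spare $C_{\rm inv}\ge 1$ or, more likely, the intended reading is that $\|u_h^j-\rP_\ell u_h^j\|_1\le c_{\rm inv}\|u_h^j-\rP_\ell u_h^j\|_0$ combined with a factor already present, and I would simply track the power of $C_{\rm inv}$ honestly and state the result accordingly (the paper's $C_{\rm inv}$ to the first power suggests they fold one factor into $\tilde C$ or into the definition of $\Lambda$; either way the structure is unchanged).

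The main obstacle I anticipate is the metric bookkeeping in Step 2: making sure that the Eckart--Young optimality is invoked in the $\rM^{1/2}$-weighted norm consistent with the fact that $\rP_\ell$ is the $L^2$-orthogonal (not Euclidean-orthogonal) projection, while the SVD in \eqref{Vrom} is of the \emph{unweighted} $\widetilde\Phi(\balpha)$. If one is not careful, one gets singular values of $\widetilde\Phi(\balpha)$ where one needs those of $\rM^{1/2}\widetilde\Phi(\balpha)$, or vice versa, and spurious factors of $\|\rM^{\pm1/2}\|\sim h^{\pm d/2}$ creep in. The clean way out is to observe that $\rP_\ell$ being $L^2$-optimal means that for \emph{any} $\ell$-dimensional subspace $W\subset V_h$ we have $\sum_j\|u_h^j-\rP_\ell u_h^j\|_0^2\le\sum_j\|u_h^j-\rP_W u_h^j\|_0^2$ is false in general, but it \emph{is} true that $\rP_\ell$ minimizes over the specific family generated by the SVD of $\widetilde\Phi(\balpha)$; so instead I bound directly $\|u_h^j-\rP_\ell u_h^j\|_0\le\|u_h^j-\widetilde{u}_h^j\|_0+\|\widetilde u_h^j-\rP_\ell\widetilde u_h^j\|_0+\|\rP_\ell(u_h^j-\widetilde u_h^j)\|_0$ where $\widetilde u_h^j$ has coefficient vector $\widetilde\Phi(\balpha)_{:,j}$, use $\|\rP_\ell\|_{0\to0}=1$ to absorb the last term into the first (doubled), control the first by Lemma~\ref{L1}, and control the middle term by recognizing that $\rP_\ell$ restricted to $\widetilde U_h$ coincides with the $L^2$-best rank-$\ell$ approximation of the columns of $\widetilde\Phi(\balpha)$, whose squared error summed over $j$ equals $\sum_{i\ge\ell+1}\tsigma_i^2(\rM^{1/2}\widetilde\Phi(\balpha))$ (Schmidt--Mirsky in the $\rM$-metric). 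This keeps every weight in its proper place and feeds directly into Lemma~\ref{L2}.
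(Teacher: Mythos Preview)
Your overall structure --- split via Lemma~\ref{L1}, bound the remaining singular-value tail via Lemma~\ref{L2}, then convert $L^2$ to $H^1$ by the inverse inequality~\eqref{Inv} on $V_h(\balpha)+\widetilde U_h$ --- is exactly the paper's proof. The one place you diverge is in handling the $\rM^{1/2}$-weighting, which you correctly flag as the delicate point but then over-engineer. The paper does not try to match the $L^2$-projection $\rP_\ell$ to a weighted SVD; it simply bounds $\|\rM^{1/2}(\rI-\rS_\ell\rS_\ell^T)\bU(\balpha)\|_F$ by pulling out $\|\rM^{1/2}\|\le c\,h^{d/2}$ via~\eqref{Mh}, so that everything is controlled by the \emph{unweighted} singular values $\tsigma_i(\balpha)$ of $\widetilde\Phi(\balpha)$ --- precisely the quantities Lemma~\ref{L2} estimates --- and Lemma~\ref{L2} already carries the matching factor $\Delta t\,h^d$ on its left-hand side, so the $h^d$ cancels and no $\rM^{1/2}$-weighted singular values ever appear.

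Your alternative resolution has a small gap: the claim that ``$\rP_\ell$ restricted to $\widetilde U_h$ coincides with the $L^2$-best rank-$\ell$ approximation of the columns of $\widetilde\Phi(\balpha)$'' is false, since $V_h^\ell(\balpha)$ is defined from the \emph{Euclidean} SVD of $\widetilde\Phi(\balpha)$ in~\eqref{Vrom}, not an $\rM$-weighted one. The cleaner fix is to note that $\rP_\ell$, being $L^2$-orthogonal, is $L^2$-optimal over $V_h^\ell(\balpha)$, so $\|u_h^j-\rP_\ell u_h^j\|_0$ is bounded above by the $L^2$-distance from $u_h^j$ to \emph{any} element of $V_h^\ell(\balpha)$, in particular to the one with coefficient vector $\rS_\ell\rS_\ell^T\bu^j$; this gives $\|u_h^j-\rP_\ell u_h^j\|_0\le\|\rM^{1/2}(\rI-\rS_\ell\rS_\ell^T)\bu^j\|_{\ell^2}$, after which the paper's spectral-equivalence route applies directly. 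Your remark on the power of $C_{\rm inv}$ is well taken: squaring~\eqref{Inv} yields $C_{\rm inv}^2$, and the paper's $C_{\rm inv}^1$ appears to be either a typo or an absorption into $\tilde C$.
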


\begin{proof}
Consider the SVD of $\widetilde{\Phi}  (\balpha) \in \R^{M \times N}$ given by
\begin{equation}
	\widetilde{\Phi} (\balpha) = {\rS}(\balpha) {\Sigma}(\balpha) {\rV}^T(\balpha),
	~~\text{with}~~ {\Sigma}(\balpha) = \text{diag} (\tsigma_1,\dots,\tsigma_N).
\end{equation}
Then the first $\ell$ columns of ${\rS}(\balpha)$ form an orthogonal basis in $V^\ell(\balpha)$, cf. \eqref{Vrom}. Consider a matrix formed by these columns $\rS _\ell = \rS(\balpha)(1:\ell) \in \R^{M \times \ell}$. The matrix representation of the orthogonal projection is $\rI  - \rS _\ell \rS _\ell^T$. By the definition of the mass matrix and the projection, we get  
\begin{equation}\label{aux666}
\left\| u_h^n( \balpha) -  \rP_\ell u_h^n(\balpha) \right\|^2_{0}=
\left\| \rM^{\frac12}(\rI  - \rS _\ell \rS _\ell^T)\bu^n(\balpha)\right\|^2_{\ell^2}.
\end{equation}
Summing up over $n$ and using the definitions of the Frobenius  matrix norm, we get
\begin{equation}\label{aux672}
\sum_{n=1}^{N} 
\left\| u_h^n( \balpha) -  \rP_\ell u_h^n(\balpha) \right\|^2_{0}
=
\left\|\rM^{\frac12} (\rI  - \rS _\ell \rS _\ell^T) \rU(\balpha)\right\|^2_{F}.
\end{equation}
We treat the term at the right-hand {side} of \eqref{aux672} as follows:
\begin{align}
	\left\|\rM^{\frac12} (\rI  - \rS _\ell \rS _\ell^T) \rU(\balpha) \right\|^2_F 
		& \le  \left( \left\| \rM^{\frac12}(\rI - \rS _\ell \rS _\ell^T) (\rU(\balpha) - \widetilde{\Phi} (\balpha)) \right\|_F
	+ \left\| \rM^{\frac12}(\rI - \rS _\ell \rS _\ell^T) \widetilde{\Phi} (\balpha) \right\|_F \right)^2 \nonumber \\
	& \le c\, h^{d} \left( \left\| ({\bU}(\balpha) - \widetilde{\Phi} (\balpha)) \right \|_F
	+ \left\| (\rI - \rS _\ell \rS _\ell^T) \widetilde{\Phi} (\balpha) \right\| \right)^2, \label{eqn:aux413}
\end{align}
where we used {the} triangle inequality, \eqref{Mh}, and $\|\rI - \rS _\ell \rS _\ell^T\| \le 1$ for the spectral norm of the projector.
For the last term in \eqref{eqn:aux413}, we observe
\begin{equation}\label{aux687}
	\left\| (\rI - \rS _\ell \rS _\ell^T) \widetilde{\Phi} (\balpha) \right\|_F =
	\left\|{\rS}(\balpha)\, \text{diag}(0,\dots,0,\tsigma_{\ell+1},\dots,\tsigma_N) \; {\rV}^T \right\|_F \le
	\left(  \sum_{j = \ell+1}^{N} \tsigma_j^2 \right)^{\frac12}.
\end{equation}
Combining \eqref{aux666}--\eqref{aux687} together with the results of \eqref{Mh}, Lemmas~\ref{L1} and~\ref{L2} proves the theorem.\\
\end{proof}

\section{Error estimate}\label{s:error}  For the error analysis we apply a standard argument. The error between ROM and FOM solutions is split into two parts
\[
u_h-u_\ell= (u_h- \rP_\ell u_h)+ (\rP_\ell u_h-u_\ell) =: e_h + e_\ell,
\]
which satisfy 
\begin{equation}
	\label{eqn:ERR}
	\Big(\frac{e_\ell^{n}-e_\ell^{n-1}}{\Delta t},v_\ell\Big)_0 + a_{\balpha}(e_\ell^n,v_\ell) =-a_{\balpha}(e_h^n,v_\ell),\quad~   \forall\,v_\ell\in V_h^\ell(\balpha),
\end{equation}
for $n=1,\dots,N$.  The equation \eqref{eqn:ERR} results from subtracting \eqref{eqn:ROM} from \eqref{eqn:FEM} tested with $v_h=v_\ell$. 
We also used that 
\[
\Big(\frac{e_h^{n}-e_h^{n-1}}{\Delta t},v_\ell\Big)_0=0,
\]
since $\rP_\ell u_h$ is the $L^2$-projection of $u_h$. 

Letting $v_\ell=e_\ell^n$ gives after multiplying by  $2\Delta t$
\begin{equation*}
	\|e_\ell^{n}\|^2_0 -\|e_\ell^{n-1}\|^2_0 + 	\|e_\ell^{n}- e_\ell^{n-1}\|^2_0 + 2\Delta t a_{\balpha}(e_\ell^n,e_\ell^n) =-2\Delta t a_{\balpha}(e_h^n,e_\ell^n).
\end{equation*}
We now apply ellipticity and continuity estimates from   \eqref{a_cond} to {$a_{\balpha}$-forms} to obtain 
\begin{equation*}
	\|e_\ell^{n}\|^2_0 -\|e_\ell^{n-1}\|^2_0 + 	\|e_\ell^{n}- e_\ell^{n-1}\|^2_0 + 2\Delta t c_a \|e_\ell^n\|^2_1 \le 2\Delta t C_a\|e_h^n\|_1 \|e_\ell^n\|_1\le \Delta t c_a^{-1}C_a^2\|e_h^n\|_1^2+ c_a\Delta t\|e_\ell^n\|_1^2.
\end{equation*}
The summation over $n$ and elementary computations give
\begin{equation*}
	\|e_\ell^{N}\|^2_0 + \Delta t c_a \sum_{n=1}^N\|e_\ell^n\|^2_1 \le \Delta t c_a^{-1}C_a^2 \sum_{n=1}^N \|e_h^n\|_1^2,
\end{equation*}
where we also used $e_\ell^{0}=0$. With the help of triangle inequality and the approximation result from Theorem~\ref{Th1} we get
\begin{equation} \label{aux930}
	\begin{split}
\Delta t \sum_{n=1}^N\|u_h^n-u_\ell^n\|^2_1 &\le C\, \Delta t \sum_{n=1}^N \|e_h^n\|_1^2
\le  C\, C_{\rm inv}\big( \eps^2 + \delta^{2p} + \Lambda_{\ell+1}\big),
	\end{split}
\end{equation}
	with some $C$ independent of $h$, $\Delta t$, $\delta$, $\ell$, and $\eps$.

Applying the triangle inequality   together with \eqref{aux930} and \eqref{h_conv}  proves our main convergence result 
\begin{theorem}
	\label{Th2}
	Let  $u=u(x,t,\balpha)$ be a solution to \eqref{eqn:GenericPDE} {such that \eqref{UnifEst} is satisfied and let}  $u_\ell$ be the {LRTD-ROM} solution to \eqref{eqn:ROM}. The following error estimate holds
	\begin{align}
				\label{eqn:err1}
		\sup_{\balpha\in\cA}\Delta t \sum_{n=1}^{N} 
		\left\| u(t_n) -  u_\ell^n \right\|^2_{1} 
		\le C\left ( \Delta t^2 + h^{2m} + C_{\rm inv}\big( \eps^2 + \delta^{2p} + \Lambda_{\ell+1}\big) \right) , 
	\end{align}
	with some $C$ independent of $h$, $\Delta t$, $\delta$, $\ell$, and $\eps$.
\end{theorem}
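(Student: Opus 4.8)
The plan is to finish by the triangle inequality, combining the full-order convergence estimate \eqref{h_conv} with the bound \eqref{aux930} on the gap between the FOM solution $u_h$ and the LRTD--ROM solution $u_\ell$. For a fixed $\balpha\in\cA$ and each time level $t_n$ I would write
\[
\|u(t_n)-u_\ell^n\|_1 \le \|u(t_n)-u_h^n\|_1 + \|u_h^n-u_\ell^n\|_1,
\]
square using $(a+b)^2\le 2a^2+2b^2$, and sum against the weight $\Delta t$ over $n=1,\dots,N$. The first resulting sum is precisely the quantity controlled by \eqref{h_conv}, which under the uniform regularity hypothesis \eqref{UnifEst} yields a bound $C(\Delta t^2+h^{2m})$ with $C$ independent of $\balpha$; the second sum is controlled by \eqref{aux930}, giving $C\,C_{\rm inv}(\eps^2+\delta^{2p}+\Lambda_{\ell+1})$. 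Taking the supremum over $\balpha\in\cA$ of the sum of these two contributions produces \eqref{eqn:err1}, and all constants are $\balpha$-independent because this holds in each of the two ingredient estimates.

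For completeness I would also recall the origin of \eqref{aux930}, which is the substantive part of the argument. One splits $u_h-u_\ell=e_h+e_\ell$ with $e_h=u_h-\rP_\ell u_h$ and $e_\ell=\rP_\ell u_h-u_\ell\in V_h^\ell(\balpha)$; subtracting \eqref{eqn:ROM} from \eqref{eqn:FEM} tested with $v_h=v_\ell$ gives the discrete evolution equation \eqref{eqn:ERR}, whose right-hand side reduces to the consistency term $-a_\balpha(e_h^n,v_\ell)$ because the $L^2$ time-difference term involving $e_h$ vanishes by the $L^2$-orthogonality of $\rP_\ell$. Testing \eqref{eqn:ERR} with $v_\ell=e_\ell^n$, invoking coercivity and continuity of $a_\balpha$ from \eqref{a_cond}, a Young inequality to absorb $\|e_\ell^n\|_1$ on the left, and telescoping in $n$ with $e_\ell^0=0$, one bounds $\Delta t\sum_n\|e_\ell^n\|_1^2$ by $\Delta t\sum_n\|e_h^n\|_1^2$, and the latter is exactly the quantity estimated in Theorem~\ref{Th1}.

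The genuinely hard part is not this final assembly but lies upstream in Theorem~\ref{Th1}: controlling $\Delta t\sum_n\|u_h^n-\rP_\ell u_h^n\|_1^2$ \emph{uniformly} in $\balpha$. That step requires (i) the interpolation estimate of Lemma~\ref{L1}, which transfers both the in-tensor interpolation error and the LRTD truncation error \eqref{eqn:TensApprox} to the out-of-sample snapshot matrix $\bU(\balpha)$, and which itself rests on the parameter-uniform regularity \eqref{eqn:Assu} of the discrete snapshots established in Theorem~\ref{Th:param}; (ii) the singular-value perturbation estimate of Lemma~\ref{L2}, relating the tail $\sum_{i\ge\ell}\tsigma_i^2(\balpha)$ of the local matrix $\widetilde\Phi(\balpha)$ to $\Lambda_\ell$, $\eps$, and $\delta^p$; and (iii) passing from the $L^2$-projection error, which the singular-value tail naturally controls, to the $H^1$-projection error, which costs the factor $C_{\rm inv}$ through the inverse inequality \eqref{Inv} on the snapshot-plus-universal space. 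The one point I would watch most carefully in writing the final proof is the bookkeeping of $\balpha$-uniformity: every constant appearing in \eqref{h_conv}, in \eqref{aux930}, and in the definitions of $\Lambda_\ell$ and $C_{\rm inv}$ must be checked to be independent of $\balpha\in\cA$, which is exactly where compactness of $\cA$ together with the smoothness hypotheses of Theorem~\ref{Th:param} enter.
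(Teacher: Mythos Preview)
Your proposal is correct and follows essentially the same route as the paper: the paper's proof of Theorem~\ref{Th2} consists precisely of the triangle inequality $\|u(t_n)-u_\ell^n\|_1\le\|u(t_n)-u_h^n\|_1+\|u_h^n-u_\ell^n\|_1$ combined with \eqref{h_conv} and \eqref{aux930}, and your recap of how \eqref{aux930} is obtained from the split $u_h-u_\ell=e_h+e_\ell$, the error equation \eqref{eqn:ERR}, and the energy argument matches the paper's derivation in Section~\ref{s:error} line for line.
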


\begin{remark}
{
    \rm  Parameter $\eps$ in \eqref{eqn:err1} measures the relative LRTD  accuracy in the $\norm{\cdot}_0$ tensor norm, which works well for the  error estimates uniform over parameter domain. 
    Alternatively, eq. \eqref{aux375} allows to bound  $\norm{\bPhi - \widetilde{\bPhi}}_0$ in \eqref{eqn:aux442} with 
    $(\Delta t h^d)^{\frac12}\norm{\bPhi - \widetilde{\bPhi}}_F$. In turn, the latter is less than $\widetilde \eps (\Delta t h^d)^{\frac12}\norm{\bPhi}_F$ from \eqref{eqn:TensApproxF}, which would  replace $\eps$ in \eqref{eqn:err1}.
    While this is an overestimate, the quantities $\widetilde \eps$ and $\norm{\bPhi}_F$ are commonly  accessible as an output of  standard LRTD compression algorithms. 
    Moreover, if HOSVD or TT-SVD are used to find $\widetilde{\bPhi}$, then $\norm{\bPhi - \widetilde{\bPhi}}_F$ enjoys upper bounds in terms of the tails of singular values of certain matrix  unfoldings of the tensor $\bPhi$~\cite{de2000multilinear,TT1}.
}
\end{remark}
\section{Numerical examples} 
\label{s:num} 

We assess the bound \eqref{eqn:err1} numerically in two examples from \cite{mamonov2022interpolatory}.
Overall, the results are in good agreement with the conclusions of Theorem~\ref{Th2}.

\subsection{Heat equation}
\label{sec:heat}

The first example is a dynamical system corresponding to a heat equation
{\begin{equation}
u_t(\bx, t, \balpha) = \Delta u(\bx, t, \balpha), \quad \bx \in \Omega, \quad t \in (0, T)
\label{eqn:heat}
\end{equation}}
in a rectangular domain with three holes 
$\Omega = [0,10]\times[0,4] \setminus (\Omega_1 \cup \Omega_2 \cup \Omega_3)$ 
shown in Figure~\ref{fig:dom3hole}. Zero initial condition is enforced and the terminal time is set to $T = 20$.
The system has $D = 2$ parameters that enter the boundary conditions:
\begin{eqnarray}
\left. (\bn \cdot \nabla u + \alpha_1(u-1)\,) \right|_{\Gamma_o} & = & 0,
\label{eqn:bco} \\
\left. \left( \bn \cdot \nabla u + \frac{1}{2} u \right) \right|_{\partial \Omega_j} & =&  \frac{1}{2} \alpha_{2},
\quad j=1,2,3, \label{eqn:bcj} \\
\left. (\bn \cdot \nabla w) \right|_{\partial ([0,10]\times[0,4]) \setminus \Gamma_o} & = & 0,
\label{eqn:bcins}
\end{eqnarray}
where, $\bn$ is the outer unit normal. 
The parameter domain is the box $\cA = [0.01, 0.501] \times [0, 0.9]$.
The system \eqref{eqn:heat}--\eqref{eqn:bcins} is discretized with $P_2$ finite elements on a quasi-uniform 
triangulation of $\Omega$ with maximum element size $h$.

\begin{figure}[ht]
\begin{center}
\includegraphics[width=0.5\textwidth]{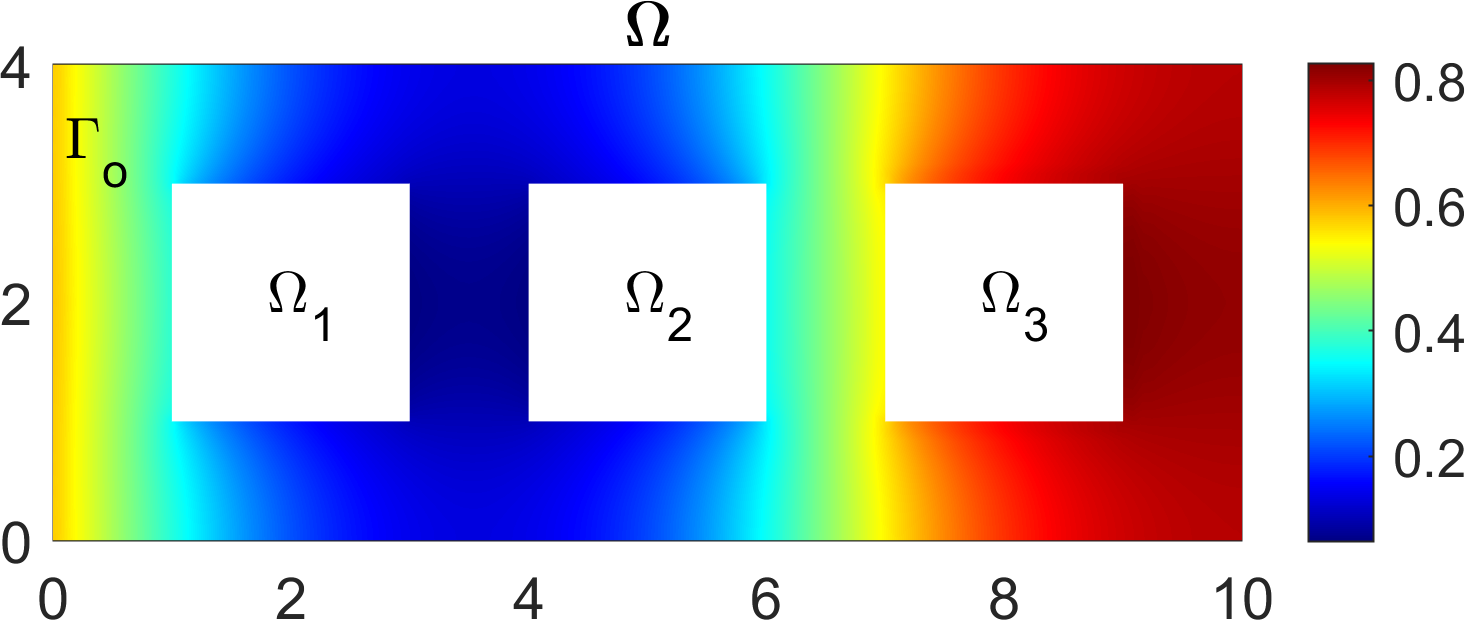} 
\end{center}
\caption{Domain $\Omega$ and the solution $u(T, \bx)$ of 
\eqref{eqn:heat}--\eqref{eqn:bcins} corresponding to $\balpha = (0.5, 0.9)^T$.}
\label{fig:dom3hole}
\end{figure}

\begin{figure}[ht]
\begin{center}
\begin{tabular}{ccc}
\includegraphics[width=0.32\textwidth]{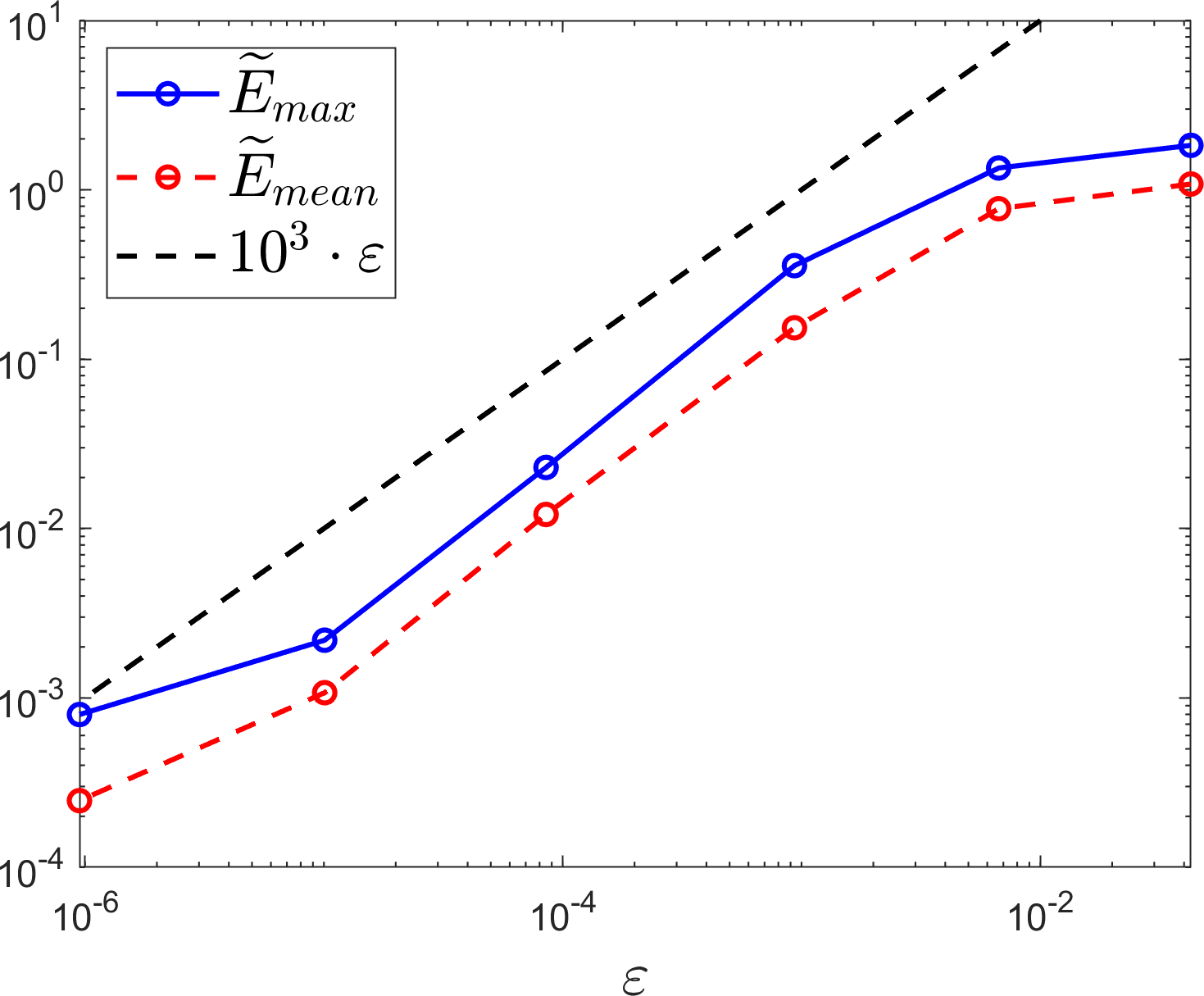} &
\includegraphics[width=0.32\textwidth]{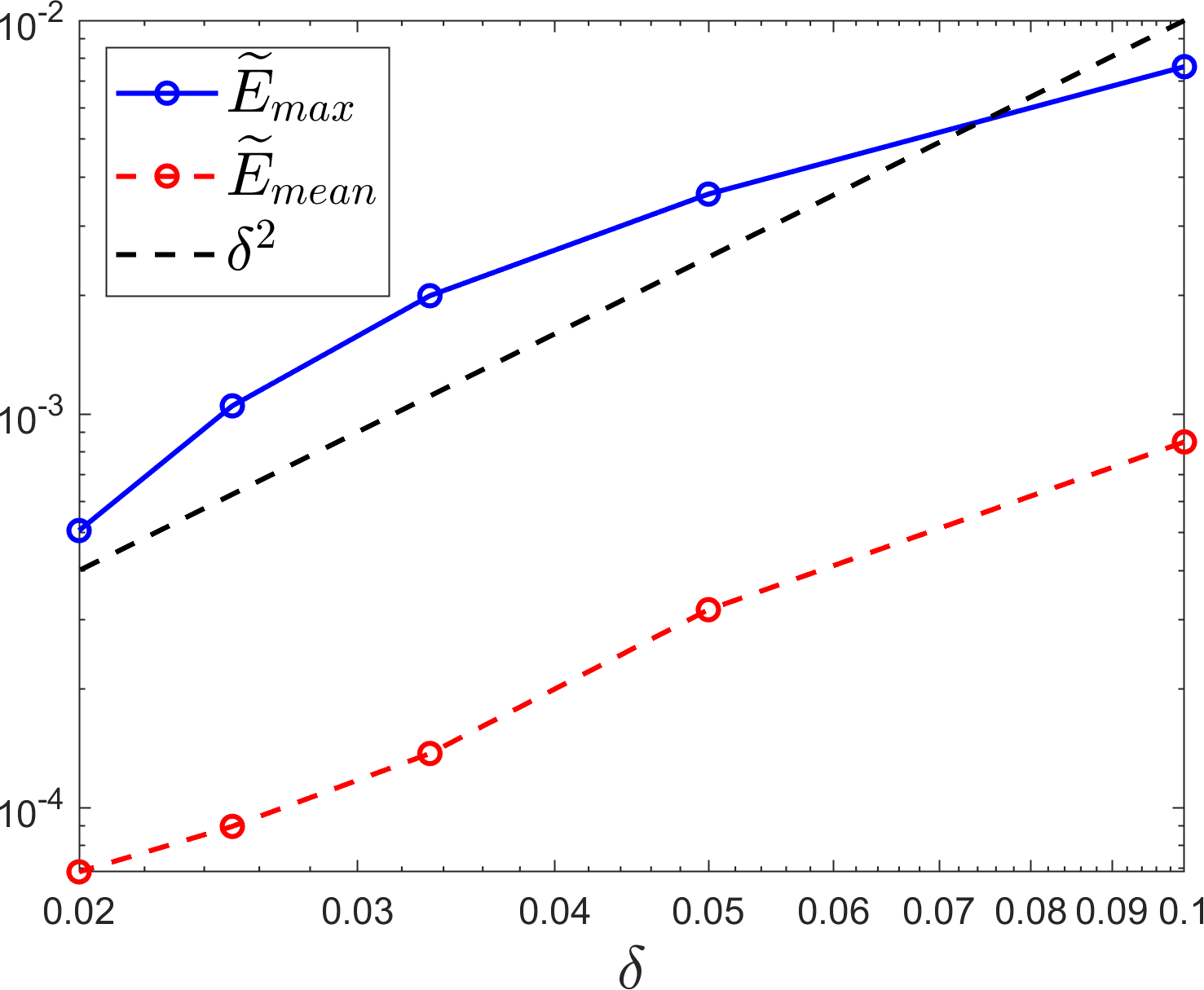} &
\includegraphics[width=0.32\textwidth]{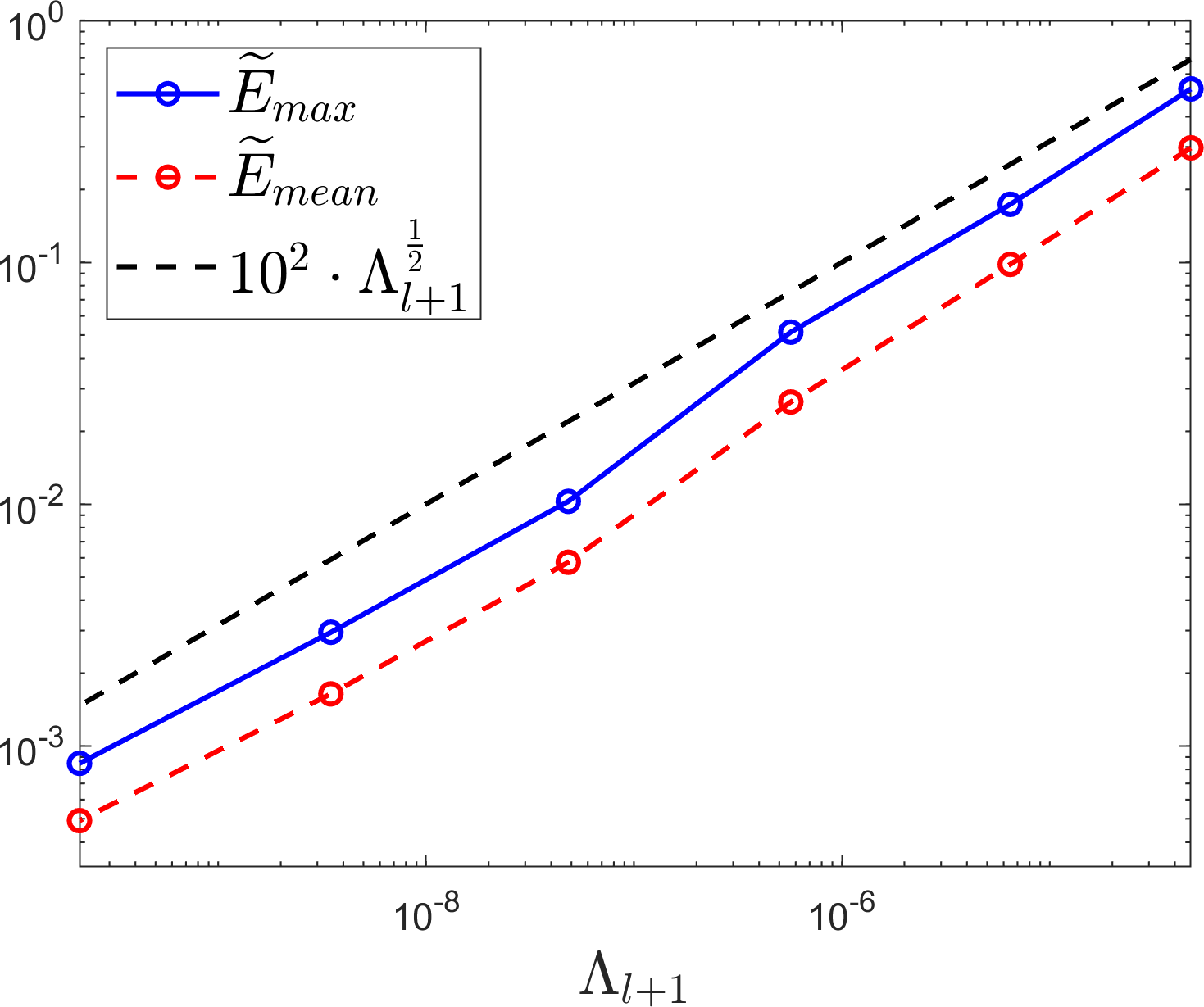} 
\end{tabular}
\end{center}
\caption{Error estimates $\widetilde{E}_{\max}$ (solid blue line with circles) and $\widetilde{E}_{\text{mean}}$ 
(dashed red line with circles) as functions of $\eps$ (left), $\delta$ (middle) and $\Lambda_{l+1}$ (right).
The slopes of dashed black lines represent the theoretical bounds from the right-hand side of \eqref{eqn:err1}:
$\eps$ (left), $\delta^2$ (middle) and $\Lambda_{l+1}^{1/2}$ (right).}
\label{fig:err3hole}
\end{figure}

Denote the error squared from \eqref{eqn:err1} for a single parameter value
\begin{equation}
E_{\balpha} = \Delta t \sum_{n=1}^{N} \left\| u(t_n) -  u_\ell^n \right\|^2_{1},
\label{eqn:erra}
\end{equation}
where $\Delta t = 0.2$ and $N = 100$.
To estimate the error on the left-hand side of \eqref{eqn:err1} we compute numerically 
\begin{eqnarray}
\widetilde{E}_{\max} & = & 
\left( \max_{\balpha\in \widetilde{\cA}} E_{\balpha} \right)^{1/2}, 
\label{eqn:errestmax} \\
\widetilde{E}_{\text{mean}} & = & 
\left( \frac{1}{\widetilde{K}} \sum_{\balpha \in \widetilde{\cA}} E_{\balpha} \right)^{1/2},
\label{eqn:errestmean}
\end{eqnarray}
where $\widetilde{\cA}$ is the  $32 \times 32$ uniform Cartesian grid  in the parameter domain and $\widetilde{K}=1024$.
Note that  $\widetilde{\cA}$ will be always different than the sampling (training) set $\widehat{\cA}$ defined in \eqref{eqn:grid}. 

Obviously, the error \eqref{eqn:erra} and therefore \eqref{eqn:errestmax}--\eqref{eqn:errestmean}
depend on the quantities on the right-hand side of the estimate \eqref{eqn:err1}: 
$\Delta t$, $h$, $\eps$, $\delta$, $p$, and $\Lambda_{l+1}$. Here we fix $p=2$, $\Delta t = 0.2$ and $h = 0.1$
and display in Figure~\ref{fig:err3hole} error estimates $\widetilde{E}_{\max}$ and $\widetilde{E}_{\text{mean}}$
for $\eps$, $\delta$ and $\Lambda_{l+1}$ that vary in the following ranges:
$\eps \in [10^{-6}, 10^{-1}]$, $\delta \in [0.02, 0.1]$ and 
$\Lambda_{l+1} \in [2 \cdot 10^{-10}, 5 \cdot 10^{-5}]$. We observe in Figure~\ref{fig:err3hole} 
that error estimates $\widetilde{E}_{\max}$ and $\widetilde{E}_{\text{mean}}$ behave exactly as predicted 
by \eqref{eqn:err1}, i.e., their rate of decay with respect to $\eps$, $\delta$ and $\Lambda_{l+1}$
is $\eps$, $\delta^2$ and $\Lambda_{l+1}^{1/2}$, respectively. As the exact solution is not explicitly known, conducting a convergence study in terms of $h$ and $\Delta t$ is not feasible. We observed that varying $h$ while keeping other parameters fixed has minimal impact on $\widetilde{E}_{\max}$ and $\widetilde{E}_{\rm mean}$ --- that is, on the error between the ROM and FOM solutions. 
 
\subsection{Advection-diffusion equation}
The second numerical example involves more ($D=5$) parameters than that in Section~\ref{sec:heat}
and corresponds to the dynamical system resulting from the discretization of a linear advection-diffusion equation
{\begin{equation}
u_t(\bx, t, \balpha) = \nu \Delta u(\bx, t, \balpha) - \bseta (\bx, \balpha) \cdot \nabla u(\bx, t, \balpha) + f(\bx),
 \quad \bx \in \Omega, \quad t \in (0, T)
\label{eqn:advdiff}
\end{equation}}
in the unit square domain $\Omega = [0,1] \times [0,1] \subset \R^2$, $\bx = (x_1, x_2)^T \in \Omega$,
with terminal time $T=1$.
Here $\nu = 1/30$ is the diffusion coefficient, $\bseta: \Omega \times \cA \to \R^2$ is the parameterized  advection field 
and $f(\bx)$ is a Gaussian source
\begin{equation}
f(\bx) = \frac{1}{2 \pi \sigma_s^2} \exp \left( - \cfrac{ (x_1 - x_1^s)^2 + (x_2 - x_2^s)^2}{2 \sigma_s^2} \right),
\label{eqn:advdiffsrc}
\end{equation}
with $\sigma_s = 0.05$, $x_1^s = \rev{x_2^s} = 0.25$. 
Homogeneous Neumann boundary conditions and zero initial condition are imposed
\begin{equation}
\left. \left( \bn \cdot \nabla u \right) \right|_{\partial \Omega} = 0, \quad u(\bx, t, \balpha) = 0.
\label{eqn:advdiffbcic}
\end{equation}
The model is parametrized with $D = 5$ parameters with the divergence free advection field $\bseta$ depending on 
$\balpha \in \R^5$. The advection field is defined as follows
\begin{equation}
\bseta(\bx, \balpha) = \begin{pmatrix} \eta_1(\bx, \balpha) \\ \eta_2(\bx, \balpha) \end{pmatrix}  = 
\begin{pmatrix} \cos(\pi/4) \\ \sin(\pi/4) \end{pmatrix} 
+ \frac{1}{\pi} \begin{pmatrix} \partial_{x_2} h(\bx, \balpha) \\ - \partial_{x_1} h(\bx, \balpha) \end{pmatrix},
\label{eqn:etacos}
\end{equation}
where $h(\bx)$ is the cosine trigonometric polynomial
\begin{equation}
\begin{split}
h(\bx, \balpha) = & \;\;\;\; \alpha_1 \cos(\pi x_1) + \alpha_2 \cos(\pi x_2) + \alpha_3 \cos(\pi x_1) \cos(\pi x_2) \\
& + \alpha_4 \cos(2\pi x_1) + \alpha_5 \cos(2\pi x_2).
\end{split}
\end{equation}
The parameter domain is a 5D box $\mathcal{A} = [- 0.1, 0.1]^5$. The system 
\eqref{eqn:advdiff}--\eqref{eqn:advdiffbcic} is discretized similarly to \eqref{eqn:heat}--\eqref{eqn:bcins}.

\begin{figure}[ht]
\begin{center}
\begin{tabular}{ccc}
\includegraphics[width=0.32\textwidth]{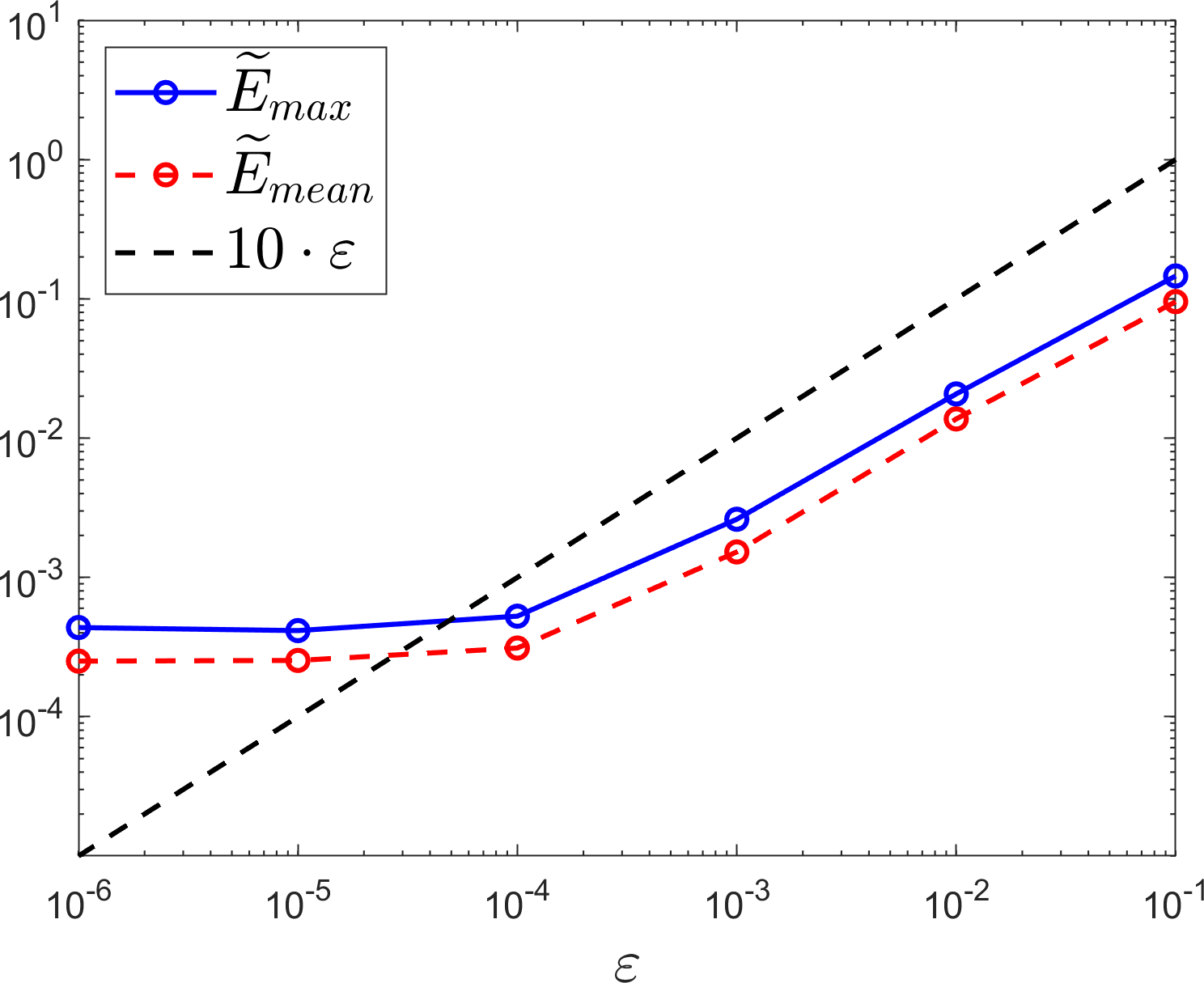} &
\includegraphics[width=0.32\textwidth]{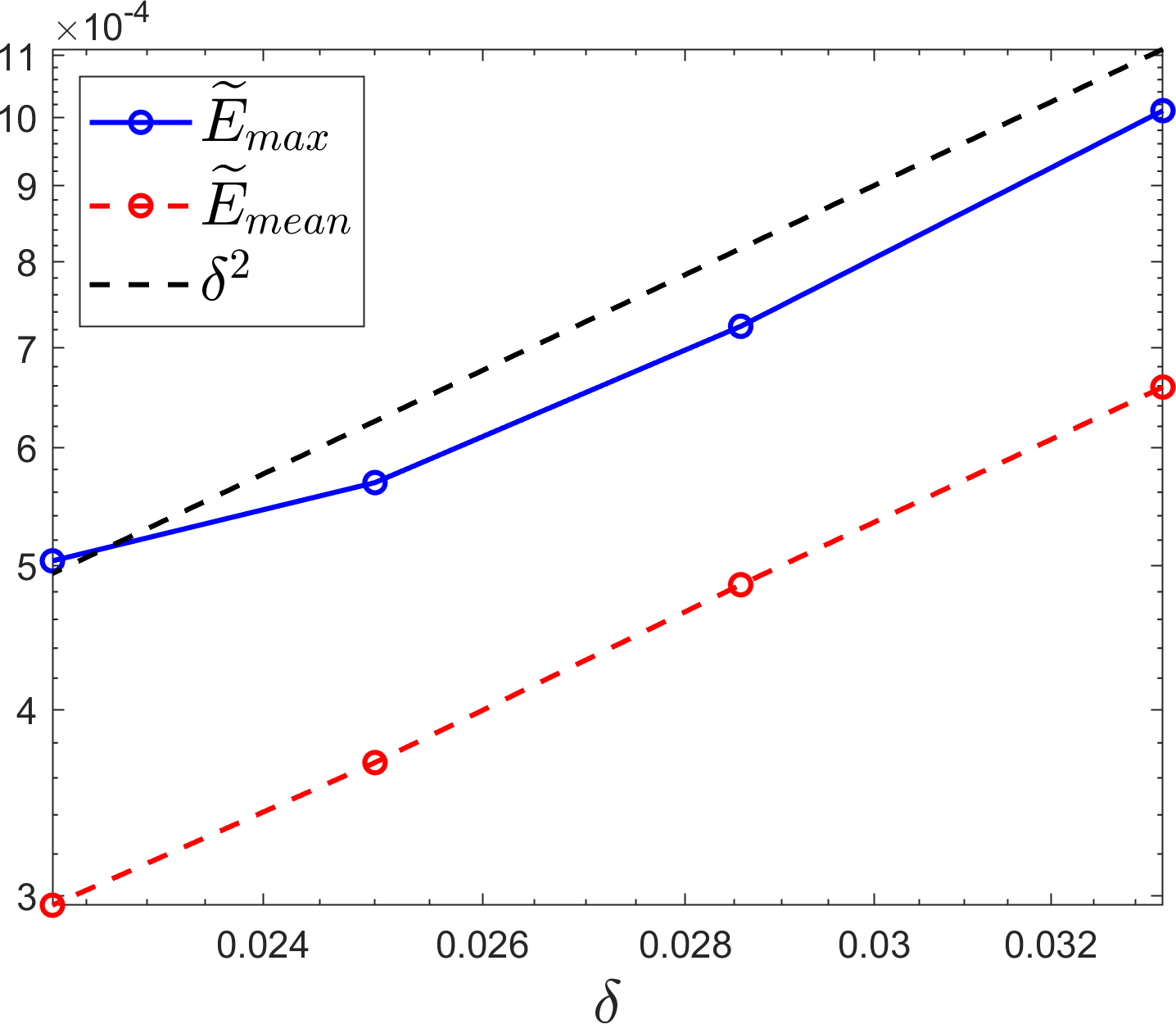} &
\includegraphics[width=0.32\textwidth]{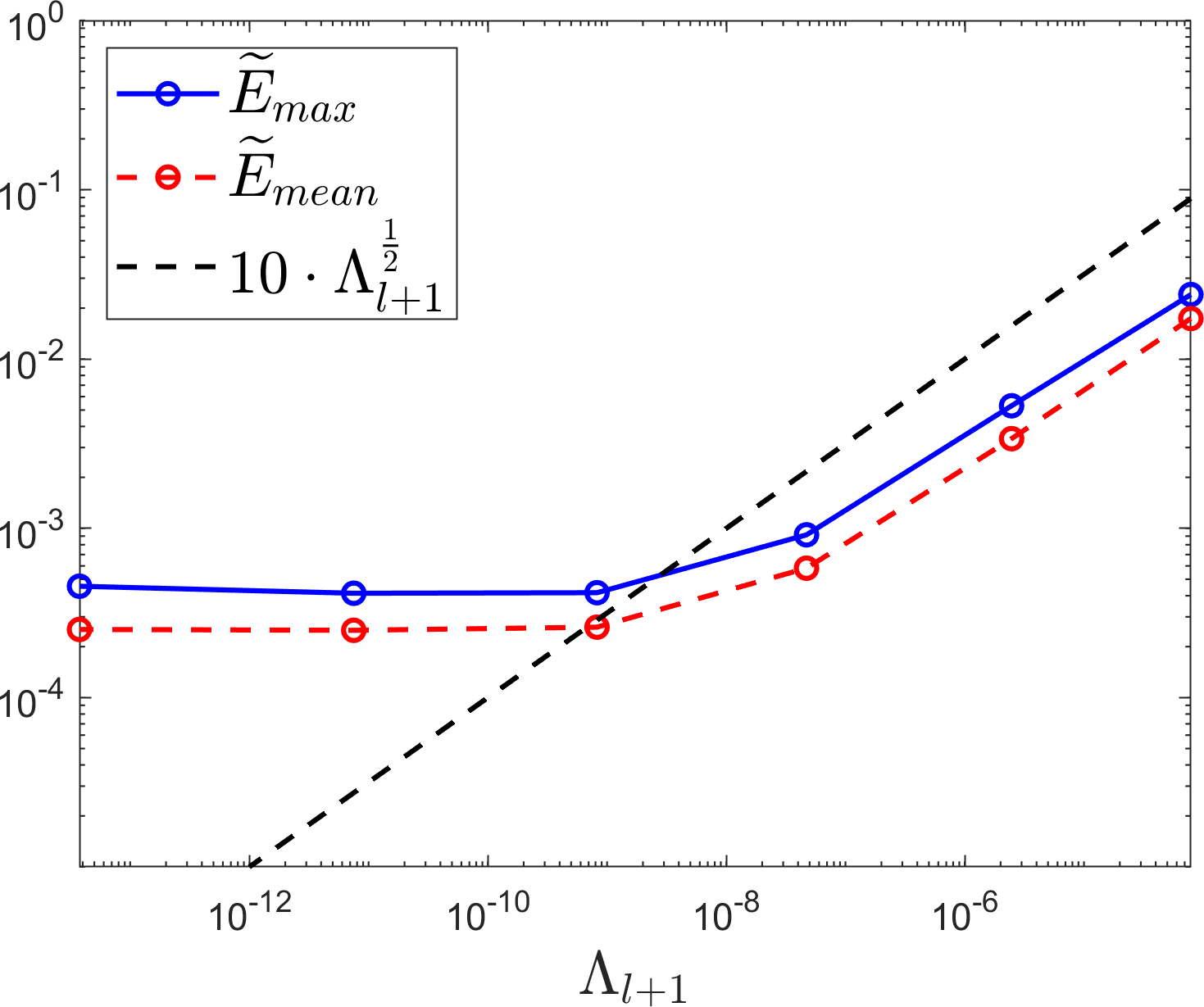}
\end{tabular}
\end{center}
\caption{Error estimates $\widetilde{E}_{\max}$ (solid blue line with circles) and $\widetilde{E}_{\text{mean}}$ 
(dashed red line with circles) as functions of $\eps$ (left), $\delta$ (middle) and $\Lambda_{l+1}$ (right).
The slopes of dashed black lines represent the theoretical bounds from the right-hand side of \eqref{eqn:err1}:
$\eps$ (left), $\delta^2$ (middle) and $\Lambda_{l+1}^{1/2}$ (right).}
\label{fig:erradvdiff}
\end{figure}

We employ the same numerical error estimates \eqref{eqn:erra}--\eqref{eqn:errestmean} as in the previous 
example with $\Delta t = 1/60$ and $N = 60$. The error estimates \eqref{eqn:errestmax}--\eqref{eqn:errestmean}
are computed over a test set  $\widetilde{\mathcal{A}}$ of $\widetilde{K} = 500$ parameter samples
chosen randomly from $\mathcal{A}$. We fix $\Delta t$ and $h = 0.05$
and display in Figure~\ref{fig:erradvdiff} error estimates $\widetilde{E}_{\max}$ and $\widetilde{E}_{\text{mean}}$
for $\eps$, $\delta$ and $\Lambda_{l+1}$ that vary in the following ranges:
$\eps \in [10^{-6}, 10^{-1}]$, $\delta \in [1/30, 1/50]$, and 
$\Lambda_{l+1} \in [3 \cdot 10^{-14}, 8 \cdot 10^{-5}]$ (which correspond to $\ell=\{5,7,9,11,13,15\}$). The recovered first TT-rank (i.e. the universal space dimension)
for $\eps=10^{-i}$, {\small $i=1,\dots,6$}, was $\widetilde R_1=\{7,20,40,67,103,159\}$, while the local {LRTD-ROM} space dimension is $\ell$. We observe in Figure~\ref{fig:erradvdiff} 
that error estimates $\widetilde{E}_{\max}$ and $\widetilde{E}_{\text{mean}}$ with respect to 
$\eps$ and $\Lambda_{l+1}$ behave as predicted by \eqref{eqn:err1}, until the two smallest values of $\eps$ 
and $\Lambda_{l+1}$ where they flatten out. The reason for such behavior is that the error becomes dominated by 
the contribution of $\delta = 1/50$. Taking a smaller value of $\delta$ for this example, given its relatively large number of parameters, proves to be computationally infeasible in our current implementation. This is due to the resulting dense sampling set $\widetilde{\mathcal{A}}$, which, in turn, leads to an impractically large snapshot tensor $\bPhi$. The size of $\bPhi$ becomes problematic both in terms of storage and the computational time required to populate it with snapshots. To handle finer parameter space meshes, the {LRTD-ROM} implementation based  on  a sparse sampling of the parameter domain  is required (cf. Remark~\ref{Rem:Impl}). Our current research is focused on extending existing low-rank tensor interpolation and completion methods to accommodate this requirement.


\section*{Acknowledgments} 
The authors were supported in part by the U.S. National Science Foundation under award DMS-2309197.
This material is based upon research supported in part by the U.S. Office of Naval Research under award number N00014-21-1-2370 to Mamonov. 

\bibliographystyle{siamplain}
\bibliography{literatur}{}

\end{document}